\numberwithin{equation}{section} %
\theoremstyle{theorem}
\newtheorem{thm}{Theorem}[section]
\newtheorem{lem}{Lemma}[section]
\newtheorem{rem}{Remark}[section]
\newtheorem{ex}{Example}[section]
\newtheorem{prop}{Proposition}[section]
\numberwithin{equation}{section}
\begin{document}
\title{Wave propagation and its stability for a class of discrete diffusion systems}
\author{
Zhixian Yu\footnote{The corresponding author. College of Science, University of Shanghai for Science and Technology,
Shanghai, 200093, China. Email: zxyu0902@163.com. Partially supported by Natural Science Foundation of Shanghai (No.18ZR1426500).}, Yuji Wan\footnote{College of Science, University of Shanghai for Science and Technology,
Shanghai, 200093, China. Email: yjwan0530@163.com} and Cheng-Hsiung Hsu\footnote{Department of Mathematics, National Central University, Chung-Li 32001, Taiwan. Email: chhsu@math.ncu.edu.tw. Partially supported by the MOST and NCTS of Taiwan.}}
\date{}
\maketitle
\begin{abstract}
This paper is devoted to study the wave propagation and its stability for a class of two-component discrete diffusive systems.
We first establish the existence of positive monotone monostable traveling wave fronts. Then, applying the techniques of weighted energy method and the comparison principle, we show that all solutions of the Cauchy problem for the discrete diffusive systems converge exponentially
to the traveling wave fronts when the initial perturbations around the wave fronts lie in a suitable weighted Sobolev space. Our main results can be extended  to more general discrete diffusive systems. We also apply them to  the discrete epidemic model with the Holling-II type and Richer type effects.
\end{abstract}
\vspace{0.25cm}
{\bf{ Keywords.}} traveling wave fronts; super- and subsolutions;
comparison principle; weighted energy estimate; exponential stability
\vspace{0.25cm}\\
{\bf{AMS subject classifications.}}  35C07, 92D25, 35B35
\newpage
\section{Introduction}
This paper is concerned with the wave propagation and its stability for a class of discrete diffusive systems. Such discrete systems arise in many
applications, e.g., the pulse propagation through myelinated
nerves \cite{Andersion-Sleeman}, the motion of domain walls in
semiconductor superlattices \cite{Carpio-Bonilla}, the sliding of charge
density waves \cite{Gruner}, and so on. Among these models, one can see the spatial discrete effects play important roles.
However, due to the special and poorly
understood phenomena occurring in these systems, the mathematical study of spatially
discrete models is more more difficult than that spatially
continuous models. Of particular phenomena is the pinning or propagation failure of wave fronts in spatially discrete equations. In past years, there were some significant progress on these subjects. We only illustrate some related works in the sequel.\medskip

In 1987, Keener \cite{Keener} studied the
propagation failure of wave fronts in coupled FitzHugh-Nagumo
systems of  discrete excitable cells
 \begin{align}\label{0.01}
 \left\{ \begin{aligned}
      \frac{dv_{1,j}(t)}{dt} =&d[v_{1,j+1}(t)-2v_{1,j}(t)+v_{1,j-1}(t)]+h(v_{1,j}(t),v_{2,j}(t)),\\
      \frac{dv_{2,j}(t)}{dt} =&g(v_{1,j}(t),v_{2,j}(t)),
  \end{aligned}
  \right.
\end{align}
where the subscript $j$ indicates the $j$th cell in a string of cells,$v_{1,j}$ represents the membrane potential of the cell and $v_{2,j}$ comprises additional variables (such as gating variables, chemical concentrations, etc.) necessary to the model. The constant $d$ means the coupling coefficient. Especially, if \eqref{0.01} is in the absent of the recovery, that is, $g\equiv0$ and $v_{2,j}$ is the constant independent of $j$, then \eqref{0.01} can reduce to a simple but
typical spatially discrete equation
\begin{align}\label{0.02}
      \frac{dv_{j}(t)}{dt} =&d[v_{j+1}(t)-2v_{j}(t)+v_{j-1}(t)]+f(v_{j}(t)).
      \end{align}
If the nonlinearity $f(\cdot)$ is a bistable function (e.g., $f(x)=x(x-a)(1-x)$), Bell and Cosner \cite{Bell-Cosner} obtained the threshold properties, that is, conditions forcing non-convergence to zero of solutions as time approaches infinity, and bounds on the speed of propagation of a ``wave of excitation". Later, Keener \cite{Keener} investigated the wave propagation and its failure for \eqref{0.02}. Subsequently, Zinner \cite{Zinner-91,Zinner-92} further considered the existence and stability of traveling wave fronts for \eqref{0.02}. Moreover, for the monostable nonlinearity $f(\cdot)$ (e.g., $f(x)=x(1-x)$), Zinner et al. \cite{Zinner-Harris-Hudson} established the existence of traveling wavefronts for the discrete Fisher's equation. Following the work of \cite{Zinner-Harris-Hudson}, there have been extensive studies on the propagation phenomenas of traveling wavefronts for more general monostable discrete equations, see  e.g., \cite{Chen-Fu-Guo,Chen-Guo02,Chen-Guo03,Fu-Guo-Shieh} and the references cited therein. \medskip

 Motivated by the system \eqref{0.01}, in this paper we are mainly concerned with the existence and stability of traveling wave fronts for the following two-component discrete diffusion system:
 \begin{align}\label{1.1}
 \left\{ \begin{aligned}
      \partial_t v_1(x,t) =&d_1\mathcal {D}[v_1](x,t)+h(v_1(x,t),v_2(x,t)),\\
      \partial_t v_2(x,t) =&d_2\mathcal {D}[v_2](x,t)+g(v_1(x,t),v_2(x,t)),
  \end{aligned}
  \right.
\end{align}
where $t>0 $, $x\in\mathbb{R}$, $d_i\geq0$, $h(u,v),g(u,v)\in C^2(\mathbb{R}^2,\mathbb{R})$ and
\begin{equation*}
\mathcal {D}[v_i](x,t):=v_i(x+1,t)-2v_i(x,t)+v_i(x-1,t),\,\,i=1,2.
\end{equation*}
System \eqref{1.1} can be considered as the continuum version of the lattice differential system
 \begin{align}\label{0.04}
 \left\{ \begin{aligned}
     {v_{1,j}^\prime(t)}=&d_1\mathcal {D}_j[v_{1}](t)+h(v_{1,j}(t),v_{2,j}(t)),\\
     {v_{2,j}^\prime(t)}=&d_2\mathcal {D}_j[v_{2}](t)+g(v_{1,j}(t),v_{2,j}(t)),
  \end{aligned}
  \right.\qquad\ \ \ \qquad
\end{align}
where $t>0 $, $j\in\mathbb{Z}$ and
$\mathcal {D}_j[v_i](t)=v_{i,j+1}(t)-2v_{i,j}(t)+v_{i,j-1}(t),\,\,i=1,2.$
Meanwhile, both systems \eqref{1.1} and \eqref{0.04} are spatial discrete versions
of the following reaction-diffusion system
\begin{align}\label{1.3}
 \left\{ \begin{aligned}
      \partial_t v_1(x,t) =&d_1\partial_{xx}v_1(x,t)+h(v_1(t),v_2(t)),\\
      \partial_t v_2(x,t) =&d_2\partial_{xx}v_2(x,t)+g(v_1(t),v_2(t)),
  \end{aligned}
  \right.\qquad\  \qquad
\end{align}
where $t>0 $, $x\in\mathbb{R}$. Systems \eqref{1.3} with special kinds of nonlinearities arise from many biological, chemical models, and so on (see \cite{Capasso-Wilson,Lewis-Schmitz,Murray}). For example, the system
\begin{align}\label{1.4}
 \left\{ \begin{aligned}
      \partial_t v_1(x,t) =&d_1\partial_{xx}v_1(x,t)-\alpha v_1(x,t)+h(v_2(x,t)),\\
      \partial_t v_2(x,t) =&d_2\partial_{xx}v_2(x,t)-\beta v_2(x,t)+g(v_1(x,t)),
  \end{aligned}
  \right.
\end{align}
with $\alpha,\beta>0$ describes the spread of an epidemic by oral-faecal transmission. Here $-\alpha v_1$ means  the natural death rate of the bacterial population; $-\beta v_2$ represents the natural diminishing rate of
the infective population due to the finite mean duration of the infectious population. The nonlinearity $h(v_2)\in C^2(\mathbb{R},\mathbb{R})$ is the contribution of the infective humans to the growth rate of the bacterial, while $g(v_1)\in C^2(\mathbb{R},\mathbb{R})$ is the infection rate of the human population.
For system \eqref{1.4},
Hsu and Yang \cite{Hsu-Yang} investigated the existence, uniqueness and asymptotic behavior of traveling waves for \eqref{1.4}. More recently, using the monotone iteration scheme via an explicit
construction of a pair of upper and lower solutions, the techniques of weighted energy method and comparison principle, Hsu et al. \cite{Hsu-Yang-Yu-17} extended \eqref{1.4} to more general delayed systems and obtained the existence and stability of traveling waves. For the lattice system \eqref{0.04}, Guo and Wu \cite{Guo-Wu09,Guo-Wu} recently investigated the existence of entire solutions and traveling wave fronts and its properties for the two-component spatially discrete competitive system
\begin{align}\label{0.03}
 \left\{ \begin{aligned}
    {v_{1,j}^\prime(t)}=&\mathcal {D}_j[v_{1}](t)+v_{1,j}(t)(1-v_{1,j}(t)-b_2v_{2,j}(t)),\\
    {v_{2,j}^\prime(t)}=&d\mathcal {D}_j[v_{2}](t)+rv_{2,j}(t)(1-v_{2,j}(t)-b_1v_{1,j}(t)),
  \end{aligned}
  \right.
\end{align}
for $d>0$. In addition, if we replace the terms $\mathcal {D}_j[v_{1}](t)$ and $\mathcal {D}_j[v_{2}](t)$ of \eqref{0.03} by ${D}[v_{1}](t)$ and ${D}[v_{2}](t)$ respectively, one can see that the profile equations of the new system are the same with those of \eqref{0.03} (cf. Section 2). Therefore, the new system also admits traveling wave fronts. \medskip

It is known that traveling wave solutions of biological models always correspond to the distribution of species and dynamics of phenomena. Therefore, it is significant to see whether the traveling wave solutions are stable or not.
Motivated by \cite{Guo-Wu,Hsu-Yang,Hsu-Yang-Yu-17}, we will investigate the existence and stability of traveling wave fronts of system \eqref{1.1}. \medskip

Recently, Hsu et al \cite{Hsu-Lin-Yang0}
considered the existence of traveling wave
solutions for the following lattice differential system:
\begin{equation}\label{1}
U'_{i,j}(t)=d_i{\mathcal D}_j[U_i](t)+f_i(U_{1,j},\cdots,U_{n,j}),\
\end{equation}
for $1\le i\le n$, $j\in\mathbb{Z}$ and $t\ge 0$, where $d_i>0$, $U_{i,j}(\cdot)\in C^2(\mathbb{R},\mathbb{R})$ and
$f_{i}(\cdot)\in C(\mathbb{R},\mathbb{R})$. Suppose the nonlinearities $f_i(\cdot)$ satisfy the following assumptions:
\begin{enumerate}
\item[(A1)]  System \eqref{1} has two homogeneous equilibria ${\bf 0}:=(0,\cdots,0)$ and ${\bf
E}:=(e_1,\cdots,e_n)$ with each $e_i>0$,
i.e. $
f_i({\bf 0})=f_{i}({\bf E})=0, \text{ for }1 \leq i \leq n. $
\item[(A2)] Assume that $\partial f_i(u)/\partial u_k\ge 0$ for all $u\in [\bf 0,\bf E]$ with $i\ne k$, $i,k=1,\cdots,n.$
Here the closed rectangle $[{\bf 0},{\bf E}]$ denotes the set
$\{u\in\mathbb{R}^n:{\bf 0}\le u\le{\bf E}\}.$
\item[(A3)] Each $f_i(\cdot)$ is Lipschitz continuous on
$[{\bf 0}, {\bf E}]$, and there exists a continuous function
 ${\bf r}=(r_1,\cdots,r_n):[0,1]\to [\bf 0,\bf E]$ with ${\bf r}(0)={\bf 0},{\bf r}(1)={\bf
 E}$ such that each $r_n$ is increasing and
$f_i\big({\bf r}(\varepsilon)\big)> 0, \text{ for
 }1\leq i\leq N \text{ and }\varepsilon \in (0,1).$
\end{enumerate}
Then the authors \cite{Hsu-Lin-Yang0} applied the truncated method to derive the following existence result of traveling wave
 solutions for system \eqref{1}.
\begin{thm}\label{existence} Assume that {\rm(A1)--(A3)} hold.
Suppose system \eqref{1}
has no other equilibrium in the closed rectangle $[{\bf 0}, {\bf E}]$,
then there exists $c^*>0$ such that if $c>c^*$ then system \eqref{1} has an increasing traveling wave solution
connecting {\bf 0} and {\bf E}.
\end{thm}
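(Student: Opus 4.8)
The plan is to look for solutions of the traveling-wave form $U_{i,j}(t)=\phi_i(j+ct)$ with wave speed $c>0$. Writing $\xi=j+ct$, system \eqref{1} reduces to the profile equations
\begin{equation}\label{prof}
c\phi_i'(\xi)=d_i\big[\phi_i(\xi+1)-2\phi_i(\xi)+\phi_i(\xi-1)\big]+f_i(\phi_1(\xi),\dots,\phi_n(\xi)),\quad 1\le i\le n,
\end{equation}
subject to $\phi(-\infty)=\mathbf{0}$, $\phi(+\infty)=\mathbf{E}$, with each $\phi_i$ nondecreasing. First I would fix $\beta>0$ large enough that, using the cooperativity (A2), each map $u\mapsto f_i(u)+\beta u_i$ is nondecreasing in every component on $[\mathbf{0},\mathbf{E}]$. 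Setting $\lambda_i:=(2d_i+\beta)/c$ and $F_i[\phi](\xi):=d_i[\phi_i(\xi+1)+\phi_i(\xi-1)]+f_i(\phi(\xi))+\beta\phi_i(\xi)$, the system \eqref{prof} is equivalent to the fixed-point equation $\phi=T[\phi]$ for the integral operator
\[
T_i[\phi](\xi):=\frac{1}{c}\int_{-\infty}^{\xi}e^{\lambda_i(s-\xi)}F_i[\phi](s)\,ds,
\]
which by the choice of $\beta$ is order-preserving on the set of profiles valued in $[\mathbf{0},\mathbf{E}]$.

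Next I would locate the threshold speed $c^*$ and build a pair of ordered super/subsolutions. Linearizing \eqref{prof} at $\mathbf{0}$ and inserting $\phi_i(\xi)=a_i e^{\lambda\xi}$ yields the matrix characteristic problem for $M(\lambda,c):=\big[d_i(e^\lambda+e^{-\lambda}-2)-c\lambda\big]\delta_{ik}+\partial_{u_k}f_i(\mathbf{0})$; by (A2) the off-diagonal entries are nonnegative, so the Perron--Frobenius theorem supplies a principal eigenvalue $\rho(\lambda,c)$ with a positive eigenvector $(a_i(\lambda))$, and $c^*$ is the least $c$ for which $\rho(\lambda,c)=0$ admits a real root $\lambda>0$. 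For $c>c^*$ pick such a $\lambda$ with positive eigenvector $(a_i)$ and set $\overline\phi_i(\xi):=\min\{e_i,\,a_i e^{\lambda\xi}\}$, a supersolution. For the subsolution I would use (A3): the increasing path $\mathbf{r}$ with $f_i(\mathbf{r}(\varepsilon))>0$ on $(0,1)$ is exactly the monostable ``push'' that makes $\underline\phi_i(\xi):=\max\{0,\,a_i e^{\lambda\xi}-q\,a_i e^{\eta\lambda\xi}\}$ (with $\eta\in(1,2)$ and $q$ large) a subsolution satisfying $\mathbf{0}\le\underline\phi\le\overline\phi\le\mathbf{E}$.

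With these ingredients I would carry out the truncated fixed-point scheme. On each bounded interval $[-X,X]$, pose the corresponding truncated integral problem, extending the nonlocal shifts $\phi_i(\xi\pm1)$ outside $[-X,X]$ by the super/subsolutions, and apply Schauder's fixed-point theorem to $T$ on the closed convex order interval $\{\phi\in C([-X,X];[\mathbf{0},\mathbf{E}]):\underline\phi\le\phi\le\overline\phi\}$; invariance follows from the monotonicity of $T$ and the super/subsolution inequalities, while the compactness Schauder requires comes from the smoothing in the integral operator together with the uniform derivative bound read off from \eqref{prof}. This produces profiles $\phi^X$ with $\mathbf{0}\le\underline\phi\le\phi^X\le\overline\phi\le\mathbf{E}$ and $\xi$-derivatives bounded uniformly in $X$. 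Letting $X\to\infty$ and extracting a locally uniformly convergent subsequence by Arzel\`a--Ascoli and a diagonal argument yields a solution $\phi$ of the full profile system on $\mathbb{R}$ with $\underline\phi\le\phi\le\overline\phi$.

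Finally I would verify monotonicity and the boundary conditions. The squeeze $\underline\phi\le\phi\le\overline\phi$ forces $\phi(-\infty)=\mathbf{0}$ directly, and monotonicity in $\xi$ is obtained from the order-preserving structure of $T$ via a sliding/iteration argument started from $\overline\phi$. Being bounded and nondecreasing, $\phi(+\infty)$ exists and, passing to the limit in \eqref{prof}, is an equilibrium of \eqref{1} in $[\mathbf{0},\mathbf{E}]$; since $\mathbf{0}$ and $\mathbf{E}$ are the only such equilibria and $\phi\ge\underline\phi\not\equiv\mathbf{0}$, we get $\phi(+\infty)=\mathbf{E}$. The hard part will be this last step: ensuring the limiting profile does not stall at an intermediate plateau but actually reaches $\mathbf{E}$. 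This is precisely where (A3) and the ``no other equilibrium'' hypothesis must be combined, and where constructing a genuinely valid subsolution (and securing strict positivity of the Perron eigenvector, which uses the cooperative/irreducible structure of (A2)) is most delicate.
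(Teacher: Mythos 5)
There is a genuine gap, and it sits exactly where you flagged it, but it is worse than a delicate step: your construction of $c^*$ and of both barriers presupposes structure that (A1)--(A3) do not provide. The matrix $M(\lambda,c)$ contains the entries $\partial_{u_k}f_i(\mathbf{0})$, the function $\overline\phi_i=\min\{e_i,\,a_ie^{\lambda\xi}\}$ is a supersolution only if the sub-tangency bound $f_i(u)\le\sum_k\partial_{u_k}f_i(\mathbf{0})\,u_k$ holds on $[\mathbf{0},\mathbf{E}]$, and $\underline\phi_i=\max\{0,\,a_ie^{\lambda\xi}-q\,a_ie^{\eta\lambda\xi}\}$ is a subsolution only under a lower bound of the form $f_i(u)\ge\sum_k\partial_{u_k}f_i(\mathbf{0})\,u_k-C|u|^{1+\nu}$ near $\mathbf{0}$. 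Under (A1)--(A3) the $f_i$ are merely continuous and Lipschitz on $[\mathbf{0},\mathbf{E}]$, so the Jacobian at $\mathbf{0}$ need not exist and $M(\lambda,c)$ is not even defined; and even for smooth monostable nonlinearities satisfying (A3), the sub-tangency inequality can fail (weak-Allee/non-KPP cases), in which case no exponential supersolution built from the linearization exists and the minimal speed is not linearly determinate. Your Perron--Frobenius step additionally needs irreducibility of the off-diagonal structure, which (A2) (nonnegativity only) does not supply. A telling consistency check is the paper's own Remark 2.1(1): under (A3), waves may exist for $c<c^*$, i.e.\ the $c^*$ of this theorem is \emph{not} the linear speed your eigenvalue equation would produce; if your construction were valid under these hypotheses, $c^*$ would be the KPP minimal speed, so you are in effect proving a different theorem under stronger assumptions.

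For comparison: the paper does not prove this statement at all --- it quotes it from Hsu--Lin--Yang \cite{Hsu-Lin-Yang0}, where it is established by the truncated method. Your outer skeleton (truncate to $[-X,X]$, fixed point on an order interval, diagonal limit, identify $\phi(\pm\infty)$ as equilibria, and invoke the no-other-equilibrium hypothesis to force $\phi(+\infty)=\mathbf{E}$) does match that route. The essential difference is how the lower barrier is made: in \cite{Hsu-Lin-Yang0} the role of your exponential subsolution is played by (A3) directly --- each constant $\mathbf{r}(\varepsilon)$ is a strict subsolution of the profile system precisely because $f_i(\mathbf{r}(\varepsilon))>0$ --- combined with boundary data on the truncated intervals and a normalization (pinning a translate at a fixed intermediate value) that prevents the limit profile from collapsing to $\mathbf{0}$ or $\mathbf{E}$; this works for all sufficiently large $c$, which is why the conclusion is only ``there exists $c^*>0$'' with no minimality claim. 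To repair your argument you should either replace $\underline\phi$ by the constant subsolutions $\mathbf{r}(\varepsilon)$ and redo the limit/normalization step along those lines, or strengthen the hypotheses to differentiability at the equilibria of (H3)-type plus sub-tangency --- which is exactly the alternative route this paper sketches in Remark 2.1(2) for system \eqref{1.1}, following \cite{Hsu-Yang}, via monotone iteration.
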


Since the profile equation of  \eqref{1.1} can be considered a special form as that of \eqref{1}, by Theorem \ref{existence}, we can directly obtain the existence of traveling wave fronts of system \eqref{1.1}. On the other hand, different to the assumption (A3), we can also derive the existence of traveling wave fronts for system \eqref{1.1} by using the monotone iteration method (see Theorem \ref{thm2.1}).\medskip

The stability of traveling wave fronts for reaction-diffusion equations with monostable nonlinearity has been extensively studied in past years, see \cite{Hsu-Yang-Yu-17,Ma-Zou,Mei-Lin-Lin-So1,Mei-Ou-Zhao,Wang-Li-Ruan1,Yang-Li-Wu1,Yu-Xu-Zhang}
and reference therein. For an example, Guo and Zimmer \cite{Guo-Zimmer} proved the global stability of traveling wave fronts for a spatially discrete equation by using a combination of the weighted energy method and the Green function technique. However, to the best of our knowledge, the stability of traveling wave solutions for multi-component discrete reaction diffusion systems is less reported. Recently, by comparison principles, Hsu and Lin \cite{Hsu-Lin} established a framework to study the stability of traveling wave solutions of the general system \eqref{1}. Unfortunately, due to different type of diffusion terms, their results can not be applied to system \eqref{1.1}. Motivated by these articles \cite{Guo-Zimmer,Hsu-Lin,Hsu-Yang-Yu-17,Mei-Ou-Zhao}, we will prove the stability of traveling wave fronts for the 2-component discrete system \eqref{1.1} by establishing the $L_{w_1}^1$, $L^1$ and $L^2$-energy estimates for the perturbation system (see Theorem \ref{thm2.2} and Section 4). Moreover, following the same proof arguments of the main theorem, we can extend the stability result to more general discrete diffusive system. \medskip

The rest of our paper is organized as follows. In Section 2, we introduce some necessary notations and present the main results
on the existence and stability of traveling wavefronts. Section 3 is devoted to analyzing the characteristic roots of the linearized equations. In Section 4, we prove the asymptotic stability of traveling wave fronts of \eqref{1.1} by using the the weighted energy method and comparison principle. Then, in Section 5, we extend the stability result of Theorem \ref{thm2.2} to the continuum version  of system \eqref{1}. Finally, we apply our main results to the discrete version of  epidemic model \eqref{1.4}.
\section{Main results}
\setcounter{equation}{0}

A solution $(v_1,v_2)$ or $(v_{1,j},v_{2,j})$ of system \eqref{1.1} or \eqref{0.04} is called a traveling wave solution
if there exist constant $c>0$ and smooth functions $\phi_1(\cdot)$ and $\phi_2(\cdot)$ such that
\begin{equation}\label{tr}
\mbox{or}\quad
\begin{array}{ll}
v_1(x,t)=\phi_1(x+ct)\ \mbox{and}\ v_2(x,t)=\phi_2(x+ct)\medskip\\
v_{1,j}(t)=\phi_1(j+ct)\ \mbox{and}\ v_{2,j}(t)=\phi_2(j+ct).
\end{array}
\end{equation}
Here $c$ means the wave speed and $\xi:=x+ct$ or $j+ct$ represents the moving coordinate.
Substituting the ans$\ddot{\rm a}$tzes of $\eqref{tr}$
into system \eqref{1.1} or  \eqref{0.04}, we can obtain the
same profile equations:
\begin{align}\label{2.1}
 \left\{ \begin{aligned}
      c\phi'_1(\xi) =&d_1\mathcal {D}[\phi_1](\xi)+h(\phi_1(\xi),\phi_2(\xi)),\\
      c\phi'_2(\xi) =&d_2\mathcal {D}[\phi_2](\xi)+g(\phi_1(\xi),\phi_2(\xi)),
  \end{aligned}
  \right.
\end{align}
where $\mathcal{D}[\phi](\xi):=\phi(\xi+1)-2\phi(\xi)+\phi(\xi-1)$.
Moreover, a traveling wave solution $(\phi_1,\phi_2)$ is called a
traveling wave front if each $\phi_i,~i=1,2$ is monotone. \medskip

To guarantee the existence of traveling wave solutions of \eqref{1.1}, throughout this article, we assume the nonlinearities $h(\cdot)$ and $g(\cdot)$ satisfy the following assumptions.
\begin{enumerate}
\item[{\rm(H1)}] System \eqref{1.1} have only two equilibria ${\bf 0}:=(0,0)$ and
${\bf K}:=(K_1,K_2)$ for some $K_1,K_2>0$ in the first quadrant, i.e.,
$h({\bf 0})=g({\bf 0})=0$ and $h({\bf K})=g({\bf K})=0$.
\item[{\rm(H2)}] $
  h_2(v):=\partial_2 { h}(v)\ge 0$ and
  $g_1(v):=\partial_1 { g}(v)\ge 0$,\
$\forall v\in{\bf I}:=[{\bf 0},{\bf K}].$
\item[{\rm(H3)}] $\alpha_i,\bar\alpha_i<0,$ for $i=1,2$,
$
\alpha_1\alpha_2<\beta_1\beta_2$ and $
\bar\alpha_1\bar\alpha_2>\bar\beta_1\bar\beta_2>0$, where
\begin{align*}
&\alpha_1:=\partial_{1} h({\bf 0}),\
\alpha_2:=\partial_{2} g({\bf 0}),\ \beta_1:=\partial_{2} h({\bf 0}),\ \beta_2:=\partial_{1} g({\bf 0}),\qquad\qquad\medskip \\
 &\bar\alpha_1:=\partial_{1} h({\bf K}),\
\bar\alpha_2:=\partial_{2} g({\bf K}),\
\bar\beta_1:=\partial_{2} h({\bf K}),\
\bar\beta_2:=\partial_{1} g({\bf K}).
\end{align*}
\end{enumerate}
Here we remark that two vectors $(u_1,\cdots,u_n)\le (v_1,\cdots,v_n)$ in $\mathbb{R}^n$ means $u_i\le v_i$ for $i=1,\cdots,n.$ An interval of
$\mathbb{R}^n$ is defined according to this order. \medskip

Based on the above assumptions, our first goal is to find solutions of \eqref{2.1} satisfying the following conditions:
\begin{equation}\label{2.2}
\lim_{ \xi\rightarrow-\infty}(\phi_1(\xi),\phi_2(\xi))={\bf 0}\,\,\,\mbox{and}\,\, \lim_{\xi\rightarrow\infty}(\phi_1(\xi),\phi_2(\xi))= {\bf K}.
\end{equation}
It's obvious that the (H1) and (H2) imply the assumptions (A1) and (A2) respectively. By the result of Theorem \ref{existence}, we immediately have the following existence result.
\begin{thm}\label{thm2.1}
Assume that $h(\cdot)$ and $g(\cdot)$ satisfy {\rm (H1), (H2)} and {\rm(A3)} or {\rm(H3)}. Then there exists a constant $c^*>0$ such that for any $c> c^*$, the system \eqref{1.1} admits a increasing traveling wave solution satisfying \eqref{2.2}.
\end{thm}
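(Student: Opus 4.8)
The plan is to separate the two alternative hypotheses and reduce the first of them to the already-established Theorem~\ref{existence}. With $n=2$, $\mathbf E=\mathbf K$, $f_1=h$ and $f_2=g$, I would first observe that (H1) is exactly (A1), and that since $\partial f_1/\partial u_2=h_2$ and $\partial f_2/\partial u_1=g_1$, condition (H2) is precisely the off-diagonal monotonicity (A2). Moreover (H1) asserts that $\mathbf 0$ and $\mathbf K$ are the \emph{only} equilibria in the first quadrant, so in particular there is no other equilibrium in $[\mathbf 0,\mathbf K]=[\mathbf 0,\mathbf E]$. Consequently, if (A3) is assumed, then (A1)--(A3) together with the no-other-equilibrium hypothesis of Theorem~\ref{existence} all hold, and that theorem immediately produces an increasing front connecting $\mathbf 0$ and $\mathbf K$ for every $c>c^*$. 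This settles the ``(A3)'' branch with no further work.

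For the ``(H3)'' branch, where (A3) need not hold, I would instead build the front by the super- and subsolution (monotone iteration) method. The first step is the linear analysis at $\mathbf 0$: substituting $(\phi_1,\phi_2)=(A,B)e^{\lambda\xi}$ into the linearization of \eqref{2.1} yields the solvability condition $P_1(\lambda,c)P_2(\lambda,c)=\beta_1\beta_2$, where $P_i(\lambda,c):=c\lambda-d_i\Delta(\lambda)-\alpha_i$ and $\Delta(\lambda)=e^{\lambda}-2+e^{-\lambda}$. The sign part of (H3), namely $\alpha_i<0$ and $\alpha_1\alpha_2<\beta_1\beta_2$ together with $\beta_1,\beta_2\ge 0$ from (H2), forces $P_1P_2-\beta_1\beta_2=\alpha_1\alpha_2-\beta_1\beta_2<0$ at $\lambda=0$ while the expression tends to $+\infty$ as $\lambda\to\infty$; a monotonicity argument in $c$ then produces a threshold $c^*>0$ such that for $c>c^*$ there are two distinct positive roots $0<\lambda_1<\lambda_2$, with a positive eigenvector $(A,B)$ attached to $\lambda_1$. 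This is the content of the characteristic analysis of Section~3, which I would invoke rather than redo.

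With $\lambda_1$ and $(A,B)$ in hand, I would take the componentwise supersolution $\bar\phi_i(\xi)=\min\{K_i,\,A_ie^{\lambda_1\xi}\}$ with $(A_1,A_2)=(A,B)$, and the subsolution $\underline\phi_i(\xi)=\max\{0,\,A_ie^{\lambda_1\xi}-q_ie^{\eta\lambda_1\xi}\}$ for suitably large $q_i>0$ and $\eta\in(1,\min\{2,\lambda_2/\lambda_1\})$, then verify the differential inequalities for \eqref{2.1} together with the ordering $\underline\phi\le\bar\phi$. Using (H2) I would recast \eqref{2.1} as a fixed point of a monotone integral operator $T$ (after adding $\gamma\phi_i$ to both sides with $\gamma$ large, so each modified nonlinearity is nondecreasing on $\mathbf I$), and iterate from $\bar\phi$ to obtain a sequence that is monotone in the iteration index and trapped between $\underline\phi$ and $\bar\phi$; its pointwise limit solves \eqref{2.1}. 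The boundary conditions \eqref{2.2} then follow by squeezing at $\xi\to-\infty$, while at $\xi\to+\infty$ a bounded monotone profile can only approach an equilibrium, and the $\mathbf K$-stability part of (H3), $\bar\alpha_i<0$ and $\bar\alpha_1\bar\alpha_2>\bar\beta_1\bar\beta_2>0$, excludes the limit $\mathbf 0$ and yields monotonicity toward $\mathbf K$.

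The main obstacle is the subsolution: choosing $\eta$ and $q_i$ so that $\underline\phi_i$ genuinely satisfies the correct inequality near $\xi\to-\infty$, where the two exponentials $e^{\lambda_1\xi}$ and $e^{\eta\lambda_1\xi}$ compete and must also dominate the quadratic Taylor remainder $O(e^{2\lambda_1\xi})$ of the nonlinearities, all while staying below $\bar\phi_i$ and above the zero state. The nonlocal diffusion $\mathcal D[\phi]$ (a difference, not a second derivative) makes this bookkeeping heavier than in the PDE case, since $\mathcal D$ acts on $e^{\lambda\xi}$ as multiplication by $\Delta(\lambda)$ and one must control $\Delta(\eta\lambda_1)$ against $\Delta(\lambda_1)$. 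The second delicate point is upgrading monotonicity of the iteration sequence to \emph{strict} monotonicity of the limit profile, which I would obtain from a sliding/comparison argument exploiting the quasimonotonicity granted by (H2).
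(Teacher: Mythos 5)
Your proposal follows essentially the same route as the paper: the (A3) branch is exactly the paper's reduction to Theorem~\ref{existence} (with (H1),(H2) giving (A1),(A2) and the no-other-equilibrium hypothesis), and the (H3) branch reproduces the argument the paper sketches in Remark~2.1 and Section~3, namely the characteristic analysis of $P(\lambda,c)=f_1f_2-\beta_1\beta_2$ at $\mathbf 0$ followed by the $\min\{K_i,A_ie^{\lambda_1\xi}\}$-type super/subsolutions and monotone iteration borrowed from \cite{Hsu-Yang,Hsu-Yang-Yu-17}. Your write-up is in fact more detailed than the paper's (which skips the verification entirely), and the details you flag as delicate are handled correctly.
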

\begin{rem} {\rm
(1) The assumption (A3) in Theorem \ref{existence} could be verified for some specific systems, e.g., the Lotka-Volterra system, epidemic model and Nicholson's Blowflies reaction-diffusion equation (see \cite{Hsu-Lin-Yang0}). However, due to the assumption (A3), one can see from the proof of \cite{{Hsu-Lin-Yang0}} that system \eqref{1.1} may have increasing traveling wave solutions satisfying \eqref{2.2} when $c<c^*$. \medskip

(2) Since the nonlinearities of system \eqref{1.1} are not so general as \eqref{1}, to avoid the verification of (A3), we may replace (A3) by the assumption (H3) in Theorem \ref{thm2.1}.
Then, following the same ideas of our previous works \cite{Hsu-Yang,Hsu-Yang-Yu-17}, we can also obtain the same assertion of Theorem \ref{thm2.1}. In this situation, the constant $c^*$ is actually the threshold speed for the existence of increasing traveling wave solution satisfying \eqref{2.2}. More precisely, the assumption (H3) can help us to investigate the characteristic roots of the linearized equations for the profile equations \eqref{2.1} at the equilibria ${\bf 0}$ and ${\bf K}$. According to the local analysis of \eqref{2.1} at the  equilibria ${\bf 0}$ and ${\bf K}$ (see Section 3) and (H2), $c^*$ is actually the threshold speed such that the linearized  equation of \eqref{2.1} at ${\bf 0}$ has positive eigenvalues. By the eigenvalues, we can construct a pair of supersolution and subsolution for \eqref{2.1}, which are the same as those of \cite{Hsu-Yang}. Then, employing the monotone iteration scheme, system \eqref{1.1} admits traveling wave solutions satisfying \eqref{2.2}. Since the  proof arguments are the same as those of \cite{Hsu-Yang,Hsu-Yang-Yu-17}, we skip the details.}
\end{rem}
Next, we state the stability result of traveling wave fronts derived in Theorem \ref{2.2}. Before that, let us introduce the following notations.
\begin{enumerate}
\item[$\circ$] Let $I$ be an interval, especially $  I=\mathbb{R}$, then we denote $ L^2(I)$ by the space of the square integrable functions on $I$.
\item[$\circ$] The space $ H^{k}(I)\,\,(k\geq 0) $ means the Sobolev space of the $ L^2 $-functions $ f(x) $  defined on $I$ whose
derivatives $\frac {d^i}{dx^i}f (i=1,\cdots,k$) also belong to $L^2(I)$.
\item[$\circ$] Let's write $ L_{\omega}^2(I) $ and $W_\omega^{k,p}(I)$ by the weight $L^2$-space and weight Sobolev space with positive weighted function $\omega(x):\mathbb{R}\to\mathbb{R}$, respectively. For any $f\in  L_{\omega}^2(I) $ or $W_\omega^{k,p}(I)$, its norm is given (resp.) by
\begin{align*}
\|f\|_{L_w^2(I)}=\big(\int_{I} w(x)|f(x)|^2dx\big)^{1/2}, \ \mbox{or}\
\|f\|_{W_\omega^{k,p}(I)}=\big(\sum_{{i=0}}^{k}\int_{I}\omega(x)|\frac{d^i}{dx^i}f(x)|^p dx\big)^{1/p}.
\end{align*}
Furthermore, we set $H_w^k(I):=W_\omega^{k,2}(I)$.
\item[$\circ$] Letting $T>0$ and $\mathcal {B}$ be a Banach space, we denote by $C^0([0,T];\mathcal {B})$ the space of the $\mathcal {B}$-valued
continuous functions on $[0,T]$ and $L^2 ([0,T];\mathcal {B})$ as the space of the $\mathcal {B}$-valued $L^2$-function on $[0,T]$.
The corresponding spaces of the $\mathcal {B}$-valued functions on $[0,\infty)$ are defined similarly.
\end{enumerate}
%
%
In this paper, we select the weight function $\omega(\xi)$ as the form
\begin{align*}
\omega(\xi)=\max\{w_1(\xi),1\} =\begin{cases}
\omega_1(\xi), &\mbox {for}\,\,\,  \xi\leq\xi_0,\\
\quad\ \ 1, &\mbox {for}\,\,\,  \xi > \xi_0,\\
\end{cases}\,\,\, \mbox{with}\,\,\,  \omega_1(\xi):=e^{-\gamma(\xi-\xi_0)},
\end{align*}
where $\gamma$ and $\xi_0$ are positive constants which will be determined later. To obtain the stability result, we further assume $h(\cdot)$ and $g(\cdot)$ satisfy the following condition.
\begin{itemize}
\item [(H4)] $\partial_{ij}h(v)\le 0$ and $\partial_{ij}g(v)\le 0$, $\forall v\in\bf{I}$, $i,j=1,2$,
\begin{center}
$\bar\alpha_1+\bar\beta_2<0$, $\bar\alpha_2+\bar\beta_1<0$, $2\bar\alpha_1+\bar\beta_1+\bar\beta_2<0$ and $2\bar\alpha_2+\bar\beta_1+\bar\beta_2<0$.
\end{center}
\end{itemize}
Let $ (\phi_1(\xi),\phi_2(\xi)) $ be the a traveling wave solution
of \eqref{1.1} satisfying \eqref{2.2} with the wave speed $ c>c^* $.
Motivated by the work of \cite{Hsu-Yang-Yu-17,Mei-Ou-Zhao}, we will
adopt the weighted energy method to establish the $L^1$-weighted,
$L^1$- and $L^2$-energy estimates (see Section 4) for the
perturbations between solutions of \eqref{1.1} and $
(\phi_1(\xi),\phi_2(\xi)) $. Noting that (H4) plays an important
role in the weighted estimates. Then, by the comparison principle
and H\"{o}lder inequality, we can obtain the following stability
result.
\begin{thm}\label{thm2.2}Assume that {\rm (H1)}--{\rm(H4)} hold and the initial data of \eqref{1.1} satisfies
\begin{equation}\label{IC}
0\leq v_{i0}(x,0)\leq K_i,\ \forall x\in\mathbb R\ \mbox{and}\  v_{i0}(x,0)-\phi_i(x)\in C(L_\omega^1(\mathbb R)\cap H^1(\mathbb R))
\end{equation}
for $i=1.2$. Then the solution of \eqref{1.1} with initial data $ (v_{10}(x,0),v_{20}(x,0)) $ uniquely exists, which satisfies $0\leq v_{i}(x,t)\leq K_i,\ \forall(x,t)\in\mathbb R\times[0,+\infty)$ and
\begin{align*}
 v_{i}(x,t)-\phi_i(x+ct)\in C\big([0,+\infty), \,\,L_\omega^1(\mathbb R)\cap H^1(\mathbb R) \big),\ i=1,2.
\end{align*}
Moreover, there are positive constants $ \mu$ and $ C $ such that
\begin{align*}
\sup_{x\in\mathbb R}|v_i(x,t)-\phi_i(x+ct)| \leq Ce^{-\mu t},\ \forall t\ge 0,\ i=1,2.
\end{align*}
\end{thm}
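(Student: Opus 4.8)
The plan is to pass to the moving coordinate $\xi:=x+ct$ and to combine the comparison principle with a weighted energy argument in the spirit of \cite{Hsu-Yang-Yu-17,Mei-Ou-Zhao}. Writing $V_i(\xi,t):=v_i(\xi-ct,t)$, $u_i:=V_i-\phi_i$ and abbreviating $f_1:=h$, $f_2:=g$, the profile equation \eqref{2.1} turns \eqref{1.1} into the perturbation system
\begin{align*}
\partial_t u_i=d_i\mathcal{D}[u_i]-c\,\partial_\xi u_i+\big[f_i(V_1,V_2)-f_i(\phi_1,\phi_2)\big],\qquad i=1,2.
\end{align*}
Global existence, uniqueness and the invariance $0\le v_i\le K_i$ come first: by (H1) the constants $\mathbf{0}$ and $\mathbf{K}$ are equilibria and by (H2) the reaction is cooperative on $\mathbf{I}$, so $[\mathbf{0},\mathbf{K}]$ is invariant and \eqref{1.1} admits a comparison principle. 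To remove the sign indeterminacy of $u_i$, I would use the standard squeezing: let $v^{+}_{i0}:=\max\{v_{i0},\phi_i\}$ and $v^{-}_{i0}:=\min\{v_{i0},\phi_i\}$ (still valued in $[0,K_i]$, still with perturbation in $L^1_\omega\cap H^1$), solve \eqref{1.1} from these data to get $v^{\pm}_i$, and note that since $\phi_i(\cdot+ct)$ is itself a solution the comparison principle yields $v^-_i\le v_i,\phi_i\le v^+_i$. Thus $|v_i-\phi_i|\le (v^+_i-\phi_i)+(\phi_i-v^-_i)$, and it suffices to prove the exponential decay of the two sign-definite perturbations $v^+_i-\phi_i\ge0$ and $\phi_i-v^-_i\ge0$.

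For such a sign-definite perturbation (say $u_i\ge0$) I would run the $L^1_\omega$, $L^1$ and $L^2$ energy estimates. The discrete diffusion is handled by the nonlocal shift identity $\int\omega\,\mathcal{D}[u_i]\,d\xi=\int\mathcal{D}[\omega]\,u_i\,d\xi$ and the drift by $\int\omega\,\partial_\xi u_i\,d\xi=-\int\omega'\,u_i\,d\xi$, giving
\begin{align*}
\frac{d}{dt}\int_{\mathbb R}\omega\,u_i\,d\xi=\int_{\mathbb R}\big[d_i\mathcal{D}[\omega]+c\,\omega'\big]u_i\,d\xi+\int_{\mathbb R}\omega\big[f_i(V)-f_i(\phi)\big]\,d\xi.
\end{align*}
Here the concavity half of (H4), $\partial_{ij}f\le0$, is essential: because $u_i$ has a fixed sign it forces $f_i(V)-f_i(\phi)$ to stay on the correct side of its linearization $\sum_k\partial_k f_i(\phi)u_k$, so the reaction is dominated by the linear coupling. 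The weight $\omega_1=e^{-\gamma(\xi-\xi_0)}$ controls the unstable end $\xi\to-\infty$, where the wave limits to $\mathbf{0}$: using the characteristic-root analysis of Section 3 together with (H3), I would fix $\gamma$ so that the $\omega$-weighted linearization at $\mathbf{0}$ is dissipative. At the stable end, where $\omega\equiv1$, the inequalities $\bar\alpha_1+\bar\beta_2<0$ and $\bar\alpha_2+\bar\beta_1<0$ of (H4) supply the $L^1$-dissipativity (row/column dominance) of the linearization at $\mathbf{K}$ that closes the coupled $i=1,2$ estimate, yielding $\sum_i\|u_i(t)\|_{L^1_\omega}\le Ce^{-\mu t}$; since $\omega\ge1$ everywhere this also controls the unweighted $L^1$ norm. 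The remaining conditions $2\bar\alpha_i+\bar\beta_1+\bar\beta_2<0$ make the symmetrized linearization at $\mathbf{K}$ negative, so the analogous $L^2$ estimate closes, and the boundedness $0\le u_i\le K_i$ gives $\|u_i\|_{L^2}^2\le K_i\|u_i\|_{L^1}$, letting the $L^1$ decay feed the $L^2$ decay.

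To reach the sup-norm I would differentiate the perturbation system once in $\xi$ and repeat the weighted $L^2$ estimate for $\partial_\xi u_i$, obtaining $\|u_i(t)\|_{H^1_\omega}\le Ce^{-\mu t}$. Since $\omega=\max\{\omega_1,1\}\ge1$ on all of $\mathbb R$, we have $\|u_i\|_{H^1}\le\|u_i\|_{H^1_\omega}$, and the one-dimensional embedding $H^1(\mathbb R)\hookrightarrow L^\infty(\mathbb R)$ then gives $\sup_\xi|u_i(\xi,t)|\le C\|u_i(t)\|_{H^1}\le Ce^{-\mu t}$. Applying this to both $v^+_i-\phi_i$ and $\phi_i-v^-_i$ and invoking the squeezing inequality above yields $\sup_x|v_i(x,t)-\phi_i(x+ct)|\le Ce^{-\mu t}$. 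The regularity statement $v_i-\phi_i\in C([0,\infty);L^1_\omega\cap H^1)$ follows from the a priori energy bounds collected along the way.

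I expect the principal obstacle to be the weighted energy estimates for the nonlocal operator $\mathcal{D}$. Unlike the continuous Laplacian it does not dissipate cleanly against the weight, so the shift identities produce the weight ratios $\omega(\xi\pm1)/\omega(\xi)$, which must be estimated uniformly, and the single admissible rate $\gamma$ is constrained simultaneously from the two ends: it must be small enough to render the weighted linearization at $\mathbf{0}$ dissipative (via the characteristic roots) yet compatible with the $\mathbf{K}$-dissipativity furnished by (H4). Reconciling these constraints so that one common decay rate $\mu>0$ survives, while keeping the nonlinear remainder under control through the sign-definiteness, is the delicate part of the argument.
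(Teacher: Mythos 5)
Your proposal follows the paper's strategy almost step for step: the same $v^\pm$ squeezing via the comparison principle, the same use of the concavity half of (H4) to turn the perturbation equations into differential inequalities for the sign-definite perturbations, the same cascade of weighted $L^1$, unweighted $L^1$ and $L^2$ estimates with $\gamma$ fixed by the characteristic roots of Section 3 at the unstable end and the linear inequalities of (H4) at the stable end, and a derivative estimate before interpolating to the sup-norm. One packaging difference is worth noting: the paper runs the decaying weighted estimate with the pure exponential $\omega_1(\xi)=e^{-\gamma(\xi-\xi_0)}$ on all of $\mathbb{R}$ (Lemma 4.1) and then performs a \emph{separate} unweighted $L^1$ estimate (Lemma 4.2), splitting at $\xi_0$ and feeding in the $\omega_1$-bound on $(-\infty,\xi_0]$ where $\omega_1\ge 1$. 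Your single estimate with the composite weight $\omega=\max\{\omega_1,1\}$ would have to cope with the kink of $\omega$ at $\xi_0$ and with the fact that the admissible positive linear combinations differ at the two ends (the pair $(p,q)$ from Lemma 3.2 near $\mathbf{0}$, versus $(1,1)$ near $\mathbf{K}$); the paper's two-step structure is designed precisely to avoid reconciling these in one inequality, though your version could likely be made to work.

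The one step that does not go through as you state it is the last one. ``Repeating the weighted $L^2$ estimate for $\partial_\xi u_i$'' does not yield $\|u_i(t)\|_{H^1_\omega}\le Ce^{-\mu t}$: the derivative $\partial_\xi u_i$ has no fixed sign, so the sign-definiteness/concavity mechanism that controls the quadratic remainders for $u_i$ is unavailable, and differentiating the coefficients produces zeroth-order source terms for which you have no weighted $L^1$-type control. Accordingly, the paper's Lemma 4.4 claims only the uniform bound $\|\partial_\xi V_i(\cdot,t)\|^2_{L^2(\mathbb{R})}\le C$, \emph{not} decay, and the embedding $H^1(\mathbb{R})\hookrightarrow L^\infty(\mathbb{R})$ alone would then give only boundedness of the sup-norm rather than decay. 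The repair is already inside your own draft: your observation $\|u_i\|^2_{L^2}\le K_i\|u_i\|_{L^1}\le Ce^{-\mu t}$, combined with $\|u_i\|^2_{L^\infty}\le 2\|u_i\|_{L^2}\|\partial_\xi u_i\|_{L^2}$ and the bounded derivative norm, gives $\sup_\xi |u_i(\xi,t)|\le Ce^{-\mu t/4}$; the paper does the same thing in the equivalent form $\|V_i\|_{L^\infty}\le 2^{2/3}\|V_i\|^{1/3}_{L^1}\|\partial_\xi V_i\|^{2/3}_{L^2}\le Ce^{-\mu t/3}$. With that substitution for the final step, your argument coincides with the paper's proof.
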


Moreover, following the same proof arguments of Theorem \ref{thm2.2}, we can generalize the above stability result to the continuum version of system \eqref{1} (see Section 5).
\section{Local analysis for \eqref{2.1}}
\setcounter{equation}{0}
In this section, we will investigate the characteristic roots of the linearized equations for the profile equations \eqref{2.1} at
the equilibria ${\bf 0}$ and ${\bf K}$. Form \eqref{2.1} and the notations in (H3), one can see characteristic polynomials of \eqref{2.1} at ${\bf 0}$ and ${\bf K}$ have the form (resp.)
\begin{align*}
P(\lambda,c):=[d_1(e^{\lambda}+e^{-\lambda}-2) +\alpha_1-c\lambda][d_2(e^{\lambda}+e^{-\lambda}-2)+\alpha_2-c\lambda]-\beta_1\beta_2,\\
\bar{P}(\lambda,c):=[d_1(e^{\lambda}+e^{-\lambda}-2)+\bar\alpha_1-c\lambda][d_2(e^{\lambda}+e^{-\lambda}-2)+\bar\alpha_2-c\lambda]
-\bar\beta_1\bar\beta_2.
\end{align*}
Then the threshold speed $c^*$ in Theorem \ref{thm2.1} can be decided by the following lemma.
\begin{lem}\label{lem3.2}
Assume \rm{(H1)--(H3)} hold.
\begin{enumerate}
\item[{\rm(1)}] There exists a positive constant $ c^* $ such that if $ c> c^* $ then $P(c,\lambda)=0$ has two positive real roots $\lambda_1(c)<\lambda_2(c)<\lambda_m^+$, i.e.,
$P(\lambda_1,c)=P(\lambda_2,c)=0 $, and $P(\lambda,c)>0 $ for any $ \lambda\in (\lambda_1(c),\lambda_2(c))$. In addition, $\lim_{c\to c^*}\lambda_1(c)=\lim_{c\to c^*}\lambda_2(c)=\lambda^*>0$, i.e., $P(\lambda^*,c^*)=0$.
\item[{\rm(2)}] For any $c>0$, there exists a $\bar{\lambda}(c)>0$ such that $\bar{P}(\bar{\lambda},c)=0.$
Moreover, if $\varepsilon>0$ and small enough, we have $\bar{P}(\bar{\lambda}-\varepsilon)<0$.
\end{enumerate}
\end{lem}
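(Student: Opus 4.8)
The plan is to reduce both statements to the elementary geometry of a single concave curve meeting a horizontal level. Write $D(\lambda):=e^{\lambda}+e^{-\lambda}-2$, which is even, strictly convex, vanishes to second order at $\lambda=0$ and grows like $e^{\lambda}$; assuming $d_1,d_2>0$ as in Theorem~\ref{existence}, set
\[
A_i(\lambda,c):=d_iD(\lambda)+\alpha_i-c\lambda,\qquad \bar A_i(\lambda,c):=d_iD(\lambda)+\bar\alpha_i-c\lambda,
\]
so that $P=A_1A_2-\beta_1\beta_2$ and $\bar P=\bar A_1\bar A_2-\bar\beta_1\bar\beta_2$. Each $A_i(\cdot,c)$ is strictly convex, equals $\alpha_i<0$ at $\lambda=0$, and tends to $+\infty$, hence has a unique positive zero $\rho_i(c)$; I let $\lambda_m^+=\lambda_m^+(c):=\min\{\rho_1(c),\rho_2(c)\}$, the largest abscissa on which both factors remain negative. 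On $(0,\lambda_m^+)$ both $A_i<0$, so $A_1A_2>0$, and since (H2) together with $\alpha_1\alpha_2<\beta_1\beta_2$ forces $\beta_1\beta_2>0$, I may pass to logarithms: the function $G(\lambda,c):=\ln(-A_1(\lambda,c))+\ln(-A_2(\lambda,c))$ is well defined there, and because each $-A_i$ is positive and concave, each $\ln(-A_i)$ is concave, so $G(\cdot,c)$ is concave on $(0,\lambda_m^+)$. The crucial equivalences are $P>0\iff G>\ln(\beta_1\beta_2)=:L$ and $P=0\iff G=L$; thus everything reduces to how the concave curve $G(\cdot,c)$ sits relative to the level $L$.

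For part (1) I would first record the boundary signs $G(0^+,c)=\ln(\alpha_1\alpha_2)<L$ (this is exactly $\alpha_1\alpha_2<\beta_1\beta_2$) and $G(\lambda,c)\to-\infty$ as $\lambda\to\lambda_m^+{}^-$. Next, since $\partial_c(-A_i)=\lambda>0$ on $(0,\lambda_m^+)$, the value $G(\lambda,c)$ is strictly increasing in $c$ for each fixed $\lambda$, and the interval $(0,\lambda_m^+(c))$ widens with $c$; hence $\Phi(c):=\max_{0<\lambda<\lambda_m^+(c)}G(\lambda,c)$ is continuous and strictly increasing. A direct estimate gives $\Phi(c)\to\ln(\alpha_1\alpha_2)<L$ as $c\to0^+$ (the maximum migrates to $\lambda=0$) and $\Phi(c)\to+\infty$ as $c\to+\infty$ (take any fixed small $\lambda$), so there is a unique $c^*>0$ with $\Phi(c^*)=L$. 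At $c=c^*$ the graph is tangent to $L$ at one interior point $\lambda^*>0$, which, being an interior maximum, satisfies $\partial_\lambda G=0$ and is therefore a double zero $P(\lambda^*,c^*)=0$; for $c>c^*$ the superlevel set $\{G(\cdot,c)>L\}$ is a nonempty open subinterval, and concavity guarantees its endpoints are the only two zeros $\lambda_1(c)<\lambda_2(c)$ of $P$ in $(0,\lambda_m^+)$, with $P>0$ strictly between them. The convergence $\lambda_1(c),\lambda_2(c)\to\lambda^*$ as $c\downarrow c^*$ then follows from the monotonicity of the two endpoints in $c$ and a squeeze at the tangency.

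For part (2) the same factorisation applies with $\bar A_i$, and now (H3) gives $\bar\alpha_1\bar\alpha_2>\bar\beta_1\bar\beta_2>0$, so $\bar P(0,c)=\bar\alpha_1\bar\alpha_2-\bar\beta_1\bar\beta_2>0$. I would trace the sign of $\bar P$ along $(0,\infty)$: writing $\bar\lambda_m^+=\min\{\bar\rho_1,\bar\rho_2\}$, one has $\bar P>0$ at $0$ while $\bar P(\bar\lambda_m^+)=-\bar\beta_1\bar\beta_2<0$, and beyond the larger zero $\max\{\bar\rho_1,\bar\rho_2\}$ both $\bar A_i$ are positive and strictly increasing, so $\bar A_1\bar A_2\to+\infty$ and $\bar P\to+\infty$. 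Hence $\bar P$ changes sign from $-$ to $+$ at a point $\bar\lambda(c)>\max\{\bar\rho_1,\bar\rho_2\}>0$; on that region $\bar P$ is strictly increasing, which is precisely $\bar P(\bar\lambda,c)=0$ together with $\bar P(\bar\lambda-\varepsilon,c)<0$ for all small $\varepsilon>0$. The first, opposite-sign crossing inside $(0,\bar\lambda_m^+)$ is discarded because its sign pattern is reversed.

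The routine parts are the endpoint evaluations and the monotonicity of $\Phi$ in $c$; the genuine obstacle is controlling the \emph{number} of positive roots of $P$, since $P$ itself is neither convex nor concave and a naive intermediate-value argument yields only existence, not the exact count nor the strict positivity on $(\lambda_1,\lambda_2)$. The logarithmic change of variables is what removes this obstacle: it converts the product $A_1A_2$ into a sum of two concave functions, after which the principle that a concave function meets a horizontal level at most twice delivers the sharp count, the tangency characterisation of $c^*$, and the sign of $P$ between the roots all at once.
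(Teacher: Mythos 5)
Your argument is correct, but it takes a genuinely different route from the paper. The paper's own proof is a four-step sketch that sets up the factors $f_i(\lambda,c)=d_i(e^{\lambda}+e^{-\lambda}-2)+\alpha_i-c\lambda$ (your $A_i$), records the sign pattern $P(0,c)<0$, $P(\lambda_i^{\pm},c)=-\beta_1\beta_2<0$, shows $P(1/\sqrt{c},c)>0$ for large $c$, locates four roots by the intermediate value theorem, defines $c^*$ as an infimum over speeds admitting a root with $P'>0$, and then defers the exact two-root count in $(0,\lambda_m^+)$, the positivity of $P$ between the roots, and the coalescence $\lambda_1,\lambda_2\to\lambda^*$ to the earlier work \cite[Lemma 2.1]{Hsu-Yang}. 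Your logarithmic substitution replaces that citation with a self-contained mechanism: on $(0,\lambda_m^+)$ one has the exact identity $P=e^{G}-\beta_1\beta_2$ with $G=\ln(-f_1)+\ln(-f_2)$ strictly concave (each $-f_i$ being positive and strictly concave there, and (H2)--(H3) giving $\beta_1\beta_2>\alpha_1\alpha_2>0$), so the concave-curve-versus-level picture delivers at once the sharp count of two roots, $P>0$ strictly between them, the tangency/double-zero characterization of $c^*$ (indeed $\partial_\lambda P=e^G\partial_\lambda G$, so the interior maximum at $c=c^*$ is automatically a double root), and the squeeze $\lambda_{1,2}(c)\to\lambda^*$ --- precisely the points the paper obtains only by reference. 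Your treatment of part (2) likewise matches the conclusion while organizing it around the sign trace $\bar P(0,c)>0$, $\bar P\le-\bar\beta_1\bar\beta_2<0$ on $[\bar\lambda_m^+,\max\{\bar\rho_1,\bar\rho_2\}]$, and strict monotonicity of $\bar f_1\bar f_2$ beyond the larger zero, which yields $\bar P(\bar\lambda-\varepsilon,c)<0$ cleanly. Two remarks: you assume $d_1,d_2>0$, which the paper states only as $d_i\ge0$, but the paper's Step 1 (existence of $\lambda_i^{\pm}$) implicitly needs the same restriction, so you are no worse off --- though it is worth flagging that if some $d_i=0$ then $\lambda_m^+=+\infty$ and the statement $\lambda_2<\lambda_m^+$ must be reinterpreted; and your asserted continuity and strict monotonicity of $\Phi(c)$ and of the endpoints $\lambda_1(c),\lambda_2(c)$ are routine but should be written out (the interior location of the maximizer follows from $\partial_\lambda G(0^+,c)=c(1/(-\alpha_1)+1/(-\alpha_2))>0$), since your definition of $c^*$ as the unique tangency speed, while cleaner than the paper's infimum, carries the burden of those verifications.
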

\begin{proof} Since the proof is similar to \cite[Lemma 2.1]{Hsu-Yang}, we only sketch the proof of part (1) by the following four steps.\medskip

Step 1. Let's set
$
f_i(\lambda,c):=d_i(e^{\lambda}+e^{-\lambda}-2)+\alpha_i-c\lambda\,\text{ for }\,i=1,2.
$
It's easy to see that then there exist
$\lambda^-_i(c)<0<\lambda^+_i(c)$, $i=1,2$ such that $f_i(\lambda_i^\pm,c)=0$,
\begin{equation}\label{3.2}
  f_i(\lambda,c)<0, \mbox{ for } \lambda\in(\lambda_i^-,\lambda_i^+) \mbox{ and }
  f_i(\lambda,c)>0,\mbox{ for }\, \lambda\in\mathbb{R}\backslash[\lambda_i^-,\lambda_i^+],\ i=1,2.
\end{equation}

Step 2. Let's set $\lambda_M^\pm:=\max\{\lambda_1^\pm,\lambda_2^\pm\}$ and $\lambda_m^\pm:=\min\{\lambda_1^\pm,\lambda_2^\pm\}$. If $c$ is large enough, we have
$ P(1/\sqrt{c},c)>0\,\,\mbox{ and }\,\, 0<{1}/{\sqrt{c}}<\lambda_m^+<+\infty. $\medskip

Step 3. Let's define
\begin{align*}
 c^*:=\inf\{c_0>0: P(\lambda,c) \mbox{ has a root } \bar\lambda\in(0,\lambda_m^+) \mbox{ and } P^\prime(\bar\lambda,c)>0 \mbox{ for } c>c_0\}.
\end{align*}
If $ c>c^* $, then there exists some $ \lambda_0\in(0,\lambda_m^+) $
such that $ P(\lambda_0,c)>0 $. Since $ P(\pm\infty,c)=+\infty $,
$$
P(0,c)=\alpha_1\alpha_2-\beta_1\beta_2<0 \,\,\mbox{ and }\,\, P(\lambda_1^\pm,c)=P(\lambda_2^\pm,c)=-\beta_1\beta_2<0.
$$
Therefore, if $c$ is large enough, $ P(\lambda,c) $ has four roots in the following intervals
$$
(-\infty,0),\,\, (0,1/\sqrt{c}),\,\,(1/\sqrt{c},\lambda_m^+)\,\mbox{ and }\,(\lambda_M^+,+\infty).
$$

Step 4. Since
\begin{equation}\label{sketch}
P(\lambda,c)=f_1(\lambda,c)f_2(\lambda,c)-\beta_1\beta_2,
\end{equation}
following the proof arguments of  \cite[Lemma 2.1]{Hsu-Yang}, we can see that $ P(\lambda,c) $ has two positive real roots $\lambda_1(c)<\lambda_2(c)$ in $ (0,\lambda^+_m) $ which satisfy the assertions.
\end{proof}

Here we mention that the parameter $\gamma$ for the weighted function $\omega(\xi)$ will be chosen by $\gamma=\lambda_1(c)+\varepsilon$ (see Section 4), where $\varepsilon>0$ and small enough. Then it follows from (1) of Lemma \ref{3.2} that $P(\gamma)>0$.  Moreover, we recall the following lemma which plays an important role in obtaining the weighted energy estimate for the stability result.
\begin{lem}\label{lem4.1}{\rm (See \cite[Lemma 3.1]{Hsu-Yang}.)}
Let $A=(a_{ij})$ be a two by two matrix such that $a_{ii}<0, i=1,2$ and $a_{ij}>0$ for $i\neq j$. Then the system of the following inequalities
\begin{equation*}
\begin{array}{l}
  a_{11}x_1+a_{12}x_2<0\ (>0, resp.)\quad\mbox{and}\quad
  a_{21}x_1+a_{22}x_2<0\ (>0, resp.)
\end{array}
\end{equation*}
has a solution $(x_1,x_2)$ with $x_i>0,i=1,2$ if and only if $\det A>0\ (<0, resp.)$.
\end{lem}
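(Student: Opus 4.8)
The plan is to exploit the fixed sign pattern of $A$ (negative diagonal, positive off-diagonal) and reduce the existence question to a single determinant sign. I would prove the strict $<0$ version in full; the parenthetical ``resp.'' version with all inequalities reversed then follows verbatim by reversing every inequality, since the sign hypotheses on the entries of $A$ are untouched.

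For the forward implication, suppose $(x_1,x_2)$ with $x_1,x_2>0$ satisfies $y_1:=a_{11}x_1+a_{12}x_2<0$ and $y_2:=a_{21}x_1+a_{22}x_2<0$. The key step is to eliminate $x_2$ by forming the linear combination $a_{22}y_1-a_{12}y_2$, in which the $x_2$-terms cancel, leaving
$$a_{22}y_1-a_{12}y_2=(a_{11}a_{22}-a_{12}a_{21})\,x_1=(\det A)\,x_1.$$
I then read off signs from the hypotheses: $a_{22}<0$ together with $y_1<0$ gives $a_{22}y_1>0$, while $a_{12}>0$ together with $y_2<0$ gives $-a_{12}y_2>0$; hence the left-hand side is positive, so $(\det A)\,x_1>0$, and since $x_1>0$ this forces $\det A>0$.

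For the converse, I would assume $\det A>0$ and construct a positive solution explicitly. Dividing each inequality by $x_1>0$ and writing $t:=x_2/x_1$, the two inequalities become the ratio bounds $t<-a_{11}/a_{12}$ and $t>-a_{21}/a_{22}$, both right-hand sides being strictly positive because of the sign pattern. A one-line manipulation (clearing the positive denominators $a_{12}$ and $-a_{22}$) shows that the interval $(-a_{21}/a_{22},\,-a_{11}/a_{12})$ is nonempty precisely when $a_{11}a_{22}>a_{12}a_{21}$, i.e.\ when $\det A>0$. Choosing any $t$ in this interval and setting $x_1=1$, $x_2=t$ then yields a solution with both components positive.

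The argument is elementary, so I anticipate no serious obstacle; the only points demanding care are the sign bookkeeping in the cancellation identity and the verification that the constructed pair lies in the open first quadrant, which is automatic once the lower ratio bound is seen to be positive. I note in passing that the statement is the $2\times2$ instance of the sign-pattern characterization behind nonsingular M-matrices: $-A$ is a Z-matrix with positive diagonal, and the lemma says $Ax<0$ admits a positive solution exactly when $\det(-A)=\det A>0$.
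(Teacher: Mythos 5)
Your proof is correct and complete. Note, however, that the paper contains no proof of this lemma at all: it is quoted verbatim from Hsu and Yang \cite[Lemma 3.1]{Hsu-Yang}, so there is no in-paper argument to compare against; your write-up in effect supplies the omitted proof. Both halves check out: the elimination identity $a_{22}y_1-a_{12}y_2=(\det A)x_1$ together with the sign pattern ($a_{22}<0$, $y_1<0$ gives $a_{22}y_1>0$; $a_{12}>0$, $y_2<0$ gives $-a_{12}y_2>0$) correctly forces $\det A>0$, and in the converse the ratio bounds $-a_{21}/a_{22}<t<-a_{11}/a_{12}$, both endpoints positive by the sign hypotheses, bound a nonempty interval exactly when $a_{12}a_{21}<a_{11}a_{22}$, i.e.\ $\det A>0$. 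One minor point of precision: the ``resp.'' version does not follow literally by reversing every inequality in place --- the two ratio bounds swap roles, so the admissible interval becomes $(-a_{11}/a_{12},\,-a_{21}/a_{22})$, nonempty precisely when $\det A<0$ --- but since both endpoints remain positive the argument goes through unchanged in substance. Your closing remark correctly places the lemma as the $2\times 2$ case of the M-matrix characterization, which is exactly the direction in which the paper generalizes it in Section 5 (Lemma \ref{lemnsystem}, via the signed leading-minor conditions \eqref{equ:det}); your determinant criterion $\det A>0$ is the $n=2$ instance of those conditions.
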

\section{Stability of traveling wave fronts}
 \setcounter{equation}{0}
\qquad This section is devoted to prove the result of Theorem \ref{thm2.2}. To this end,
we first give some auxiliary statements about the global solutions of the Cauchy problem for \eqref{1.1} and the comparison principle. By the standard energy method and continuity extension method (see, \cite{Mei-So-Li-Shen}), we have the following result.
\begin{prop} \label{prop5.1}
Assume that {\rm (H1)--(H3)} hold, the initial data $ (v_{10}(x,0), v_{20}(x,0)) $ of system \eqref{1.1} satisfy the conditions of \eqref{IC}.
Then \eqref{1.1} admits a unique solution $(v_1(x,t),$ $v_2(x,t))$ such that $0\leq v_{i}(x,t)\leq K_i,\ \forall(x,t)\in\mathbb R\times[0,+\infty)$ and
\begin{align*}
 v_{i}(x,t)-\phi_i(x+ct)\in C\big([0,+\infty), \,\,L_\omega^1(\mathbb R)\cap H^1(\mathbb R) \big),\ i=1,2.
\end{align*}
\end{prop}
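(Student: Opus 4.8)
The plan is to establish Proposition \ref{prop5.1} by combining a local existence argument, a priori $L^\infty$ bounds from the comparison principle, and weighted-space regularity estimates, then extending the local solution globally. First I would set up the perturbation variables. For $i=1,2$, write $u_i(\xi,t):=v_i(x,t)-\phi_i(\xi)$ with $\xi=x+ct$, so that in the moving frame the unknowns $u_i$ satisfy a perturbed system obtained by subtracting the profile equations \eqref{2.1} from \eqref{1.1}. By the assumption \eqref{IC}, the initial perturbation $u_{i0}(\cdot):=v_{i0}(\cdot,0)-\phi_i(\cdot)$ lies in $L_\omega^1(\mathbb R)\cap H^1(\mathbb R)$. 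Local existence and uniqueness in this space then follow from a standard contraction-mapping / semigroup argument: the discrete diffusion operator $\mathcal D$ is a bounded linear operator on $L^2$ and on $L^1_\omega$ (the shift by $\pm 1$ only changes the weight by the bounded factor $e^{\mp\gamma}$ on $(-\infty,\xi_0]$), while the nonlinearities $h,g\in C^2$ are locally Lipschitz on the bounded set $\mathbf I$, so Picard iteration yields a unique local solution $u_i\in C([0,T_0);L_\omega^1\cap H^1)$ for some $T_0>0$.

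Second I would prove the invariant-region bound $0\le v_i(x,t)\le K_i$ using the comparison principle. The key point is that (H2) guarantees quasi-monotonicity: $h_2\ge 0$ and $g_1\ge 0$ on $\mathbf I$, so the system is cooperative on $[\mathbf 0,\mathbf K]$. Since $h(\mathbf 0)=g(\mathbf 0)=0$ and $h(\mathbf K)=g(\mathbf K)=0$ by (H1), the constant functions $\mathbf 0$ and $\mathbf K$ are respectively a subsolution and a supersolution of \eqref{1.1}. Because the initial datum satisfies $0\le v_{i0}(x,0)\le K_i$, the comparison principle for cooperative discrete-diffusion systems sandwiches the solution: $0\le v_i(x,t)\le K_i$ for all $x$ and all $t$ in the interval of existence. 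This uniform $L^\infty$ bound is what prevents blow-up and lets the nonlinear terms stay in the region where the Lipschitz constants from (H2)--(H4) apply.

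Third, to extend the solution globally and keep it in the weighted space, I would derive a priori estimates controlling $\|u_i(t)\|_{L^1_\omega}$ and $\|u_i(t)\|_{H^1}$ on $[0,T_0)$. The $L^\infty$ bound already gives pointwise control; the work is to show the weighted $L^1$ and $H^1$ norms do not escape to infinity in finite time. Here I would invoke precisely the weighted-energy machinery announced for Section 4: multiply the perturbation equations by appropriate weights and by $u_i$ (respectively by $\operatorname{sgn}(u_i)$ for the $L^1_\omega$ estimate), integrate over $\mathbb R$, and use (H4) together with Lemma \ref{lem4.1} and the sign condition $P(\gamma)>0$ to absorb the bad terms and obtain a differential inequality of the form $\frac{d}{dt}\mathcal E(t)\le C\,\mathcal E(t)$ for the combined energy $\mathcal E(t)=\sum_i(\|u_i\|_{L^1_\omega}+\|u_i\|_{H^1}^2)$. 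Grönwall's inequality then bounds $\mathcal E(t)$ on every finite interval, so the local solution cannot leave $L_\omega^1\cap H^1$ in finite time and the continuation principle extends it to $[0,\infty)$; continuity in time in this space follows from the same estimates. Uniqueness is inherited from the local contraction argument combined with the comparison principle.

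The main obstacle I anticipate is the weighted $L^1$ estimate near $\xi\to-\infty$, where the weight $\omega(\xi)=e^{-\gamma(\xi-\xi_0)}$ grows exponentially. The shifted diffusion terms $u_i(\xi\pm1,t)$ interact with this exponential weight, producing factors $e^{\mp\gamma}$, and one must verify that the linearization of the reaction at $\mathbf 0$, encoded in the characteristic polynomial $P(\gamma)$, is controlled with the correct sign so that these weight-induced factors are dominated rather than amplified. This is exactly where the choice $\gamma=\lambda_1(c)+\varepsilon$ ensuring $P(\gamma)>0$, the monotonicity and decay of the profile $\phi_i$, and the convexity/sign conditions in (H4) must all be used simultaneously; getting the constants to line up so that the cross-terms cancel or are absorbed is the delicate part, and it is effectively the content deferred to the detailed computations of Section 4. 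Since the statement of Proposition \ref{prop5.1} explicitly cites \cite{Mei-So-Li-Shen} for the standard energy and continuation method, I would lean on that reference for the routine parts and concentrate the new work on verifying the weighted estimate under (H1)--(H4).
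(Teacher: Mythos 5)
Your overall architecture is exactly what the paper intends: the paper gives no written-out proof of Proposition \ref{prop5.1}, stating only that it follows ``by the standard energy method and continuity extension method'' with a citation to \cite{Mei-So-Li-Shen}, and your plan --- local existence by Picard iteration, the invariant region $0\le v_i\le K_i$ from the cooperative comparison principle (H1)--(H2) with the equilibria ${\bf 0}$ and ${\bf K}$ as constant sub- and supersolutions, and a Gr\"{o}nwall-plus-continuation argument for global extension --- is precisely that standard method, spelled out. A small simplification worth noting: since system \eqref{1.1} contains no spatial derivative at all and $\mathcal D$ is a bounded operator on each of the spaces involved, in the original $(x,t)$ frame \eqref{1.1} is simply an ODE on a Banach space, so the local theory is immediate without any semigroup for the transport term; the moving frame (where $c\partial_\xi$ appears) is then only bookkeeping for the regularity statement about $v_i(x,t)-\phi_i(x+ct)$.

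The one genuine flaw is in your third step: you import machinery that is neither available nor needed. Proposition \ref{prop5.1} assumes only (H1)--(H3), so you may not invoke (H4), Lemma \ref{lem4.1}, or the sign condition $P(\gamma,c)>0$ with $\gamma=\lambda_1(c)+\varepsilon$ in the continuation argument; as written, your proof would establish the proposition only under the stronger hypotheses of Theorem \ref{thm2.2}. The repair is that for bounds on \emph{finite} time intervals no sign structure is required: once the solution is trapped in $[{\bf 0},{\bf K}]$, the $C^2$ nonlinearities $h,g$ are globally Lipschitz there; the shifts in $\mathcal D$ cost at most the fixed factors $e^{\pm\gamma}$ against the weight; and the transport term contributes at most $c\gamma\|u_i(\cdot,t)\|_{L^1_{\omega}}$ after integration by parts. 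Hence every term is dominated by $C\,\mathcal E(t)$ with a possibly large constant $C>0$, and Gr\"{o}nwall gives the finite-time bound directly, with no cancellation needed. The delicate bookkeeping you flag as the ``main obstacle'' (the choice of $\gamma$, the positivity $P(\gamma,c)>0$, the sign conditions in (H4), Lemma \ref{lem4.1}) is exactly what drives the uniform-in-time \emph{decay} estimates of Lemmas \ref{lem5.2}--\ref{lem5.5}, not global well-posedness; your concluding paragraph therefore anticipates the difficulty of the wrong statement, though this does not damage the correctness of the proposition's proof once the unneeded hypotheses are deleted.
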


Similar to the proofs of \cite[Proposition 3]{Martin-Smith} and \cite[Lemma 3.2]{Thieme},
we easily obtain the following comparison principle.

\begin{prop}\label{lem5.1}{\rm (Comparison principle)}
Assume {\rm (H1)--(H3))}. Let $(v^\pm_{1}(x,t),v^\pm_{2}(x,t))$
be the solutions of system \eqref{1.1} with the initial data $(v^\pm_{10}(x,0),v^\pm_{20}(x,0))$, respectively. If
$
(v^{-}_{10}(x,0),v^{-}_{20}(x,0))\leq(v^{+}_{10}(x,0),v^{+}_{20}(x,0))
$
for all $x\in\mathbb R$, then it follows that
\begin{align*}
(v^{-}_{1}(x,t),v^{-}_{2}(x,t))\leq(v^{+}_{1}(x,t),v^{+}_{2}(x,t)),\
\forall (x,t)\in\mathbb{R}\times\mathbb{R}_+.
\end{align*}
\end{prop}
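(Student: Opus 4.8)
The plan is to reduce the claim to a maximum-principle argument for the difference of the two solutions. Writing $w_i := v_i^+ - v_i^-$ for $i=1,2$, it suffices to show $w_i(x,t) \ge 0$ for all $(x,t) \in \mathbb{R} \times \mathbb{R}_+$, given $w_i(\cdot,0) \ge 0$. Subtracting the two copies of \eqref{1.1} and applying the mean value theorem componentwise to $h(v_1^+,v_2^+) - h(v_1^-,v_2^-)$ and to $g(v_1^+,v_2^+) - g(v_1^-,v_2^-)$, I would record the linearized system
\begin{align*}
\partial_t w_1 &= d_1 \mathcal{D}[w_1] + a_{11}(x,t) w_1 + a_{12}(x,t) w_2, \\
\partial_t w_2 &= d_2 \mathcal{D}[w_2] + a_{21}(x,t) w_1 + a_{22}(x,t) w_2,
\end{align*}
where, after splitting each difference across one variable at a time, $a_{12}$ is $\partial_2 h$ at an intermediate value and $a_{21}$ is $\partial_1 g$ at an intermediate value. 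The crucial structural input is {\rm(H2)}: it forces $a_{12}\ge 0$ and $a_{21}\ge 0$, so the linearized system is cooperative. Since both solutions stay in the bounded region $[\mathbf 0,\mathbf K]$ (Proposition \ref{prop5.1}) and $h,g\in C^2$, all four coefficients $a_{ij}$ are uniformly bounded.

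Next I would use a shifted, $\epsilon$-perturbed auxiliary function to make the differential inequality strict. Fix $T>0$ and set $z_i := w_i + \epsilon e^{\kappa t}$ for small $\epsilon>0$ and a constant $\kappa>0$ to be chosen. Because $\mathcal{D}$ annihilates functions constant in $x$, one has $\mathcal{D}[z_i]=\mathcal{D}[w_i]$, and a direct computation gives
\begin{equation*}
\partial_t z_i = d_i \mathcal{D}[z_i] + a_{ii} z_i + a_{ij} z_j + \epsilon e^{\kappa t}\big(\kappa - a_{ii} - a_{ij}\big), \quad j\neq i.
\end{equation*}
Choosing $\kappa > \sup_{x,t,i}(a_{ii}+a_{ij})$, which is finite by the bound above, makes the final term strictly positive, while at $t=0$ we have $z_i\ge\epsilon>0$.

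The heart of the argument is a first-crossing (sliding) argument. Suppose $\min_i\inf_x z_i$ first reaches $0$ at some time $t^*\in(0,T]$. Here the function-space hypothesis is essential: since $w_i(\cdot,t)\in H^1(\mathbb R)\hookrightarrow C_0(\mathbb R)$, we have $z_i(x,t)\to \epsilon e^{\kappa t}>0$ as $|x|\to\infty$, so the infimum is attained at a finite point $x_0$, say $z_{i_0}(x_0,t^*)=0$ with $z_i\ge 0$ throughout $\mathbb R\times[0,t^*]$. At such an interior minimum, $\partial_t z_{i_0}(x_0,t^*)\le 0$, whereas the discrete diffusion satisfies the maximum principle $\mathcal{D}[z_{i_0}](x_0,t^*)=z_{i_0}(x_0+1,t^*)-2z_{i_0}(x_0,t^*)+z_{i_0}(x_0-1,t^*)\ge 0$, since the two neighbors are nonnegative and the center vanishes. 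Substituting into the displayed equation and using $z_{i_0}(x_0,t^*)=0$, $d_{i_0}\ge 0$, $a_{i_0 j}\ge 0$ and $z_j\ge 0$, every term on the right is nonnegative except the last, which is strictly positive; hence $\partial_t z_{i_0}(x_0,t^*)>0$, a contradiction. Therefore $z_i>0$ on $\mathbb R\times[0,T]$ for every $\epsilon>0$; letting $\epsilon\to 0$ and then $T\to\infty$ gives $w_i\ge 0$ everywhere, which is the assertion.

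The step I expect to be the main obstacle is the attainment of the minimum at a finite spatial point on the unbounded domain $\mathbb R$: without decay of $w_i$ at spatial infinity the infimum need not be realized, and the local maximum-principle computation would have nothing to bite on. This is precisely what the integrability and regularity of the perturbations (the $C([0,\infty),L^1_\omega\cap H^1)$ membership supplied by Proposition \ref{prop5.1}) provide, through the embedding $H^1(\mathbb R)\hookrightarrow C_0(\mathbb R)$. If one prefers to avoid that embedding, an alternative is to replace $\epsilon e^{\kappa t}$ by a spatial barrier growing at infinity, or to recast \eqref{1.1} as an abstract cooperative integral equation and invoke the monotone comparison results of Martin--Smith and Thieme cited above; but the direct maximum-principle route seems the most transparent.
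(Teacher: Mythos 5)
Your proof is correct in substance, but it takes a genuinely different route from the paper: the paper gives no direct argument at all, stating only that the proposition follows as in the proofs of Martin--Smith \cite{Martin-Smith} (Proposition 3) and Thieme \cite{Thieme} (Lemma 3.2) --- that is, it invokes the abstract comparison theory for quasimonotone (cooperative) evolution equations, which is precisely the fallback you sketch in your final sentence. In that framework the right-hand side $v\mapsto d_i\mathcal{D}[v_i]+(\mbox{reaction})_i$ is a bounded, quasimonotone vector field on a space of bounded functions (the off-diagonal shift terms $v_i(x\pm 1,t)$ in $\mathcal{D}$ and the derivatives $h_2,g_1\ge 0$ from (H2) are all order-preserving), so comparison follows from monotonicity of ODE flows in Banach spaces without any decay hypothesis at spatial infinity. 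Your direct first-crossing maximum-principle argument is self-contained and more transparent; it correctly exploits that \eqref{1.1} has no first-order $x$-derivative, so at the touching point only the sign $\mathcal{D}[z_{i_0}](x_0,t^*)\ge 0$ is needed, and your use of the invariance $0\le v_i^\pm\le K_i$ (Proposition \ref{prop5.1}) to place the mean-value intermediate points inside ${\bf I}$, where (H2) applies, is the right way to get cooperativity and bounded coefficients. Two points to tighten: (i) the attainment of the infimum and the existence of a first crossing time $t^*$ require spatial decay that is \emph{uniform on compact time intervals}; this follows from $w_i\in C([0,T];H^1(\mathbb{R}))$ (compactness of the orbit in $H^1$ plus the embedding $H^1(\mathbb{R})\hookrightarrow C_0(\mathbb{R})$), not merely from $w_i(\cdot,t)\in H^1(\mathbb{R})$ at each fixed $t$, and you should also note that $t\mapsto\min_i\inf_x z_i(x,t)$ is continuous before extracting $t^*$; (ii) your argument proves the proposition only for solutions whose difference lies in this weighted/Sobolev class, i.e.\ for data satisfying \eqref{IC}, whereas the abstract route covers arbitrary ordered bounded data --- though for the application in Section 4, where the data are $\min/\max$ of $v_{i0}$ and $\phi_i$, your restricted version suffices.
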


Hereinafter, we assume the initial data $(v_{10}(x,0), v_{20}(x,0))$ satisfies the conditions of \eqref{IC}, and set
\begin{equation*}
v_{i0}^-(x,0)\triangleq\min\{v_{i0}(x,0),\phi_i(x)\},\
v_{i0}^+(x,0)\triangleq\max\{v_{i0}(x,0),\phi_i(x)\},\ \mbox{for }i=1,2.
\end{equation*}
According to Proposition \eqref{prop5.1}, we denote $ (v_{1}^\pm(x,t),v_{2}^\pm(x,t)) $ by the nonnegative solutions
of system \eqref{1.1} with the initial data $ (v_{10}^\pm(x,0),v_{20}^\pm(x,0)) $. Then it follows from Proposition \ref{lem5.1} that
\begin{align}\label{5.1}
\left\{\begin{aligned}
&0\leq v_1^{-}(x,t) \leq v_1(x,t) ,\phi_1(x+ct) \leq v_1^{+}(x,t) \leq K_1,  \\
&0 \leq v_2^{-}(x,t) \leq v_2(x,t),\phi_2(x+ct) \leq v_2^{+}(x,t)\leq K_2,  \\
\end{aligned}\,\,\,\forall (x,t) \in \mathbb R \times \mathbb R_+.\right.
\end{align}
Therefore, the stability result of Theorem \ref{thm2.2} follows provided that
$ (v_1^\pm(x,t),v_2^\pm(x,t) )$ converges to $ (\phi_1(\xi),\phi_2(\xi) )$. For convenience, we denote
\begin{equation*}
V^\pm_{i}(\xi,t)\triangleq v_i^\pm(\xi-ct,t)-\phi_i(\xi),\ i=1,2,
\end{equation*}
with the corresponding initial data $ V_{10}^\pm(x,0)$ and $ V_{20}^\pm(x,0). $ Then our goal is to show that there exist positive constants $ C, \mu $, such that
\begin{align}\label{5.2}
\sup_{x\in\mathbb{R}}|V_{1}^\pm(\xi,t)|,\,\,\,\, \sup_{x\in\mathbb{R}}|V_{2}^\pm(\xi,t)|\leq Ce^{-\mu t},\,\,\mbox{for}\,\,t\geq0.
\end{align}
In the sequel, we only prove the assertion of \eqref{5.2} for $(V_1^+(\xi,t),V_2^+(\xi,t))$, since the the assertion for $(V_1^-(\xi,t),V_2^-(\xi,t))$
can be proved by the same way.

\subsection{$L_{\omega_1}^1$-energy and $L^1$-energy estimates}
For convenience,
we simplify the notations $ (V_{1}^+(\xi,t),V_{2}^+(\xi,t) )$ by $  (V_{1}(\xi,t),V_{2}(\xi,t) )$, and denote
$X(\xi,t):=(V_1(\xi,t),V_2(\xi,t))^T$ and
$\Phi(\xi):=(\phi_1(\xi),\phi_2(\xi))^T.$ By \eqref{1.1}, \eqref{2.1} and elementary computations, $V_1(\xi,t)$ and $V_2(\xi,t)$ satisfy
the system
\begin{align}\label{5.3}
\begin{cases}
&\ \ \ \partial_t V_1(\xi,t)+c\partial_\xi V_1(\xi,t)-d_1{\mathcal D}[V_1](\xi,t)=h(\Phi(\xi) +X(\xi,t))-h(\Phi(\xi) )\\
&=\nabla h(\Phi)X(\xi,t)+\displaystyle\frac{1}{2}[h_{11}(\tilde{\Phi})V_1^2+h_{22}(\tilde{\Phi})V_2^2+2h_{12}(\tilde{\Phi})V_1V_2],\medskip\\
&\ \ \ \partial_t V_2(\xi,t)+c\partial_\xi V_2(\xi,t)-d_2{\mathcal D}[V_2](\xi,t)=g(\Phi(\xi) +X(\xi,t) )-g(\Phi(\xi) )\\
&=\nabla g(\Phi)X(\xi,t)+\displaystyle\frac{1}{2}[g_{11}(\tilde{\Psi})V_1^2+g_{22}(\tilde{\Psi})V_2^2+2g_{12}(\tilde{\Psi})V_1V_2],
\end{cases}
\end{align}
with initial data
$
V_{10}(x,0)= v_{10}^+(x,0)-\phi_1(x)\ \mbox{and}\
V_{20}(x,0)= v_{20}^+(x,0)-\phi_2(x),
$
where
\begin{center}
$\Phi(\xi)\leq\tilde\Phi(\xi,t),\tilde\Psi(\xi,t)\leq\Phi(\xi) +X(\xi,t)$.
\end{center}
Obviously, $ V_{10}(x,0),V_{20}(x,0)\in C(L_\omega^1(\mathbb{R})\cap H^1(\mathbb{R})) $ and Proposition \ref{prop5.1}
implies that the solution $ V_1(\xi,t),V_2(\xi,t)\in C(L_\omega^1(\mathbb{R})\cap H^1(\mathbb{R})) $ for each $ t\in[0,+\infty) $. Furthermore, in order to establish the energy estimate, technically we need the sufficient regularity for the solution $V_1(\xi,t)$ and $V_2(\xi,t)$ of \eqref{5.3}.
To do this, the usual approach is applying the technique of  mollification. Let us mollify the initial data as
\begin{align*}
V_{i0}^{\varepsilon}(x,0)=J_{\varepsilon}\ast V_{i0}(x,0)=\int_{\mathbb{R}}J_{\varepsilon}(x-y)V_{i0}(y,0)dy\in W_\omega^{2,1}(\mathbb{R})\cap H^2(\mathbb{R}),\ i=1,2,
\end{align*}
where $ J_{\varepsilon}(\cdot) $ is the usual mollifier. Let $V_{1}^{\varepsilon}(\xi,t)$ and $V_{2}^{\varepsilon}(\xi,t)$ be the solution to {\eqref{5.3}} with the above mollified initial data.
We then have
\begin{align*}
V_{i}^{\varepsilon}(\xi,t)\in C([0,\infty),W_\omega^{2,1}(\mathbb{R})\cap H^2(\mathbb{R})),\ i=1,2.
\end{align*}
By taking the limit $\varepsilon\rightarrow 0$, we can obtain the corresponding energy estimate for original solution
$V_i(\xi,t)$ (cf. \cite[Lemma 3.1]{Mei-Ou-Zhao}). For the sake of simplicity, in the sequel we formally use $V_i(\xi,t)$ to establish the desired energy estimates.
\begin{lem}\label{lem5.2}
Assume that {\rm (H1)}--{\rm(H4)} hold. For any $c>c^*$ and $ \gamma=\lambda_1(c)+\varepsilon $, where $ \varepsilon>0 $ is small enough, there exist positive constants $ \mu $ and $ C $ such that
\begin{equation}\label{5.7}
\|V_1(\cdot,t)\|_{L^1_{\omega_1}(\mathbb{R})}+\|V_2(\cdot,t)\|_{L^1_{\omega_1}(\mathbb{R})}
+\int_0^t e^{\mu(s-t)}\big(\|V_1(\cdot,s)\|_{L^1_{\omega_1}}+V_2(\cdot,s)\|_{L^1_{\omega_1}}\big) ds
<Ce^{-\mu t}
\end{equation}
for each $t\ge 0$, where $ \omega_1(\xi)=e^{-\gamma(\xi-\xi_0)} $.
\end{lem}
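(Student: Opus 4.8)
The plan is to reduce \eqref{5.7} to a closed, two-dimensional system of differential inequalities for the weighted masses $G_i(t):=\|V_i(\cdot,t)\|_{L^1_{\omega_1}(\mathbb R)}$ and to close that system using the characteristic information of Lemma~\ref{lem3.2} together with the algebraic Lemma~\ref{lem4.1}. The essential simplification is that, by \eqref{5.1}, the perturbations are nonnegative, $V_1,V_2\ge 0$, so $|V_i|=V_i$ and $G_i(t)=\int_{\mathbb R}\omega_1(\xi)V_i(\xi,t)\,d\xi$. After mollifying the initial data to obtain the regularity needed for the computations (and passing to the limit at the end, as in \cite[Lemma~3.1]{Mei-Ou-Zhao}), I would multiply the $i$-th equation of \eqref{5.3} by $\omega_1$ and integrate over $\mathbb R$, so that $\frac{d}{dt}G_i=\int_{\mathbb R}\omega_1\,\partial_t V_i\,d\xi$.

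Two linear contributions must be computed exactly. For the transport term, integration by parts gives $\int_{\mathbb R}\omega_1(-c\,\partial_\xi V_i)\,d\xi=c\int_{\mathbb R}\omega_1' V_i\,d\xi=-c\gamma\,G_i$, the boundary terms vanishing since $V_i\in L^1_{\omega_1}\cap H^1$. For the discrete diffusion I would use the global exponential form $\omega_1(\xi)=e^{-\gamma(\xi-\xi_0)}$ and the substitutions $\eta=\xi\pm1$, which turn $\int_{\mathbb R}\omega_1\mathcal D[V_i]\,d\xi$ into $(e^{\gamma}+e^{-\gamma}-2)G_i$; this is exactly the step that forces the use of the pure weight $\omega_1$ rather than the truncated $\omega$. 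Setting $A_i:=d_i(e^{\gamma}+e^{-\gamma}-2)-c\gamma$, the diagonal coefficient becomes $A_i+\alpha_i=f_i(\gamma,c)$ once the linear reaction coefficient is added. The reaction terms are handled by the first half of (H4): since $\partial_{jk}h,\partial_{jk}g\le 0$ on $\mathbf I$ and $\Phi$ is increasing by \eqref{2.2}, the maps $\xi\mapsto h_1(\Phi(\xi)),\,g_1(\Phi(\xi)),\,h_2(\Phi(\xi)),\,g_2(\Phi(\xi))$ are nonincreasing, hence bounded above by their limits $\alpha_1,\beta_2,\beta_1,\alpha_2$ at $-\infty$; combined with $V_i\ge 0$ this yields $\int_{\mathbb R}\omega_1 h_1(\Phi)V_1\,d\xi\le \alpha_1 G_1$, and likewise for the three companion terms. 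Finally, since $\tilde\Phi,\tilde\Psi\in\mathbf I$ and $V_i\ge0$, each quadratic remainder in \eqref{5.3} is nonpositive termwise (again $\partial_{jk}\le0$) and is simply discarded.

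Collecting these estimates yields the cooperative inequality $\frac{d}{dt}(G_1,G_2)^T\le M\,(G_1,G_2)^T$ with $M=\bigl(\begin{smallmatrix}f_1(\gamma,c)&\beta_1\\ \beta_2&f_2(\gamma,c)\end{smallmatrix}\bigr)$, where I have also used $\beta_1\beta_2>\alpha_1\alpha_2>0$ from (H3) to see $\beta_1,\beta_2>0$. Now the structural facts from Lemma~\ref{lem3.2} enter: for $\gamma=\lambda_1(c)+\varepsilon\in(0,\lambda_m^+)$ one has $f_i(\gamma,c)<0$ (Step~1 of its proof) while $\det M=f_1f_2-\beta_1\beta_2=P(\gamma,c)>0$ (part~(1)). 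Since $M^T$ has negative diagonal, positive off-diagonal entries and positive determinant, Lemma~\ref{lem4.1} supplies constants $k_1,k_2>0$ with $k_1f_1+k_2\beta_2<0$ and $k_1\beta_1+k_2f_2<0$. Writing $L(t):=k_1G_1+k_2G_2$, these two strict inequalities give $\frac{d}{dt}L\le-b_1G_1-b_2G_2$ with $b_1,b_2>0$; choosing $\mu>0$ with $2\mu k_i\le b_i$ then gives $\frac{d}{dt}L+\mu L\le -c_1(G_1+G_2)$ for $c_1=\tfrac12\min\{b_1,b_2\}$. Multiplying by $e^{\mu t}$, integrating on $[0,t]$, and using $\min\{k_1,k_2\}(G_1+G_2)\le L\le\max\{k_1,k_2\}(G_1+G_2)$ then delivers both the pointwise decay and the weighted time-integral term of \eqref{5.7}, with $C$ proportional to $L(0)=k_1\|V_{10}\|_{L^1_{\omega_1}}+k_2\|V_{20}\|_{L^1_{\omega_1}}<\infty$.

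The integration by parts and the change of variables are routine; the only point needing genuine care is justifying these manipulations for merely $L^1_{\omega_1}\cap H^1$ data, which is precisely what the mollification and the subsequent limit provide. The conceptual crux, and the step I would isolate first, is the observation that the sign conditions in (H4) make the reaction coefficients monotone along the profile, so that they can be replaced by their limiting values at $\mathbf 0$; this collapses the whole estimate onto the single matrix $M$, whose stability is then guaranteed exactly by $f_i(\gamma,c)<0$ and $P(\gamma,c)>0$.
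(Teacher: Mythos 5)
Your proposal is correct and follows essentially the same route as the paper: drop the quadratic remainders by the concavity in (H4), bound $h_1(\Phi),h_2(\Phi),g_1(\Phi),g_2(\Phi)$ above by $\alpha_1,\beta_1,\beta_2,\alpha_2$ (monotone coefficients along the increasing profile, with $V_i\ge0$), compute the transport and discrete-diffusion contributions against the pure exponential weight $\omega_1$ to produce the diagonal entries $f_i(\gamma,c)<0$, and invoke $P(\gamma,c)>0$ with Lemma~\ref{lem4.1} to get the positive multipliers. Your $(k_1,k_2)$ and the Lyapunov combination $L=k_1G_1+k_2G_2$ are exactly the paper's constants $(p,q)$ from \eqref{key} used to combine \eqref{5.5} and \eqref{5.6}; phrasing it as a differential inequality $\frac{d}{dt}(G_1,G_2)^T\le M(G_1,G_2)^T$ rather than integrating in time first is a purely cosmetic difference, and your explicit justification of the coefficient bounds via (H4) is a point the paper leaves implicit.
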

\begin{proof} According to \rm{(A3)} and \eqref{5.3}, we have
\begin{align}\label{5.4}
\begin{cases}
&\partial_t V_1(\xi,t)+c\partial_\xi V_1(\xi,t)-d_1{\mathcal D}[V_1](\xi,t)-\nabla h(\Phi)X(\xi,t)\leq 0,\medskip\\
&\partial_t V_2(\xi,t)+c\partial_\xi V_2(\xi,t)-d_2{\mathcal D}[V_2](\xi,t)-\nabla g(\Phi)X(\xi,t)\leq 0.
\end{cases}
\end{align}
Multiplying the equations of \eqref{5.4} by $ e^{\mu t}\omega_1(\xi) $ for some $ \mu>0$  and integrating it over $ \mathbb{R}\times [0,t] $
with respect to $\xi$ and $t$,
since $V_i\in L_\omega^1(\mathbb R)\cap H^1(\mathbb R)\subseteq L_{\omega_1}^1(\mathbb R)\cap H^1(\mathbb R)$,
$\{e^{\mu t}\omega_1V_i\}|_{\xi=-\infty}^{\infty}=0\,\,(i=1,2)$, it follows that
\begin{align*}
0 \ge&\int_0^t\int_{-\infty}^\infty\big( \{e^{\mu s}\omega_1V_1(\xi,s)\}_s+\{ce^{\mu s}\omega_1V_1(\xi,s)\}_\xi
+e^{\mu s}\omega_1V_1(\xi,s)(-\mu+c\gamma-\alpha_1)\\
&\qquad\qquad\qquad\quad\qquad\qquad-e^{\mu s}d_1\omega_1{\mathcal D}[V_1](\xi,s)-e^{\mu s}\omega_1\beta_1V_2(\xi,s)\big)d\xi ds\\
= &e^{\mu t}\|V_1(\cdot,t)\|_{L^1_{\omega_1}(\mathbb{R}) }-\|V_{1}(\cdot,0)\|_{L^1_{\omega_1}(\mathbb{R})}
-\int_0^t\int_{-\infty}^\infty e^{\mu s}\omega_1\beta_1 V_2(\xi,s)d\xi ds\\
&+\int_0^t\int_{-\infty}^\infty e^{\mu s}\omega_1 V_1(\xi,s)[-\mu-f_1(\gamma,c)]d\xi ds.
\end{align*}
Note that $f_i(\lambda,c),\ i=1,2$ are defined in Lemma \ref
{lem3.2}. Hence, we have
\begin{align}\label{5.5}
e^{\mu t}\|V_1(\cdot,t)\|_{L^1_{\omega_1}(\mathbb{R}) }+&\int_0^t\int_{-\infty}^\infty e^{\mu s}\omega_1
V_1(\xi,s)[-\mu-f_1(\gamma,c)]d\xi ds \nonumber\\
-&\int_0^t\int_{-\infty}^\infty e^{\mu s}\omega_1\beta_1 V_2(\xi,s)d\xi ds\leq C_1,
\end{align}
for some constant $C_1>0$. Similarly, from the second equation of \eqref{5.4}, it yields
\begin{align}\label{5.6}
e^{\mu t}\|V_2(\cdot,t)\|_{L^1_{w_1}(\mathbb{R}) }+&\int_0^t\int_{-\infty}^\infty e^{\mu s}w_1
V_2(\xi,s)[-\mu-f_2(\gamma,c)]d\xi ds \nonumber\\
-&\int_0^t\int_{-\infty}^\infty e^{\mu s}w_1\beta_2 V_1(\xi,s)d\xi ds\leq C_2,
\end{align}
for some constant $C_2>0$. Since $\gamma=\lambda_1+\varepsilon$ with small $\varepsilon>0$, from the proof of Lemma \ref
{lem3.2} we know that $f_i(\gamma,c)<0$ for $i=1,2.$
Then it follows from Lemma \ref{lem4.1} that there are positive constant $p$ and $q$ satisfying the inequalities
\begin{align}\label{key}
&p f_1(\gamma,c)+q\beta_2<0\ \mbox{and}\
p\beta_1+q f_2(,\gamma,c)<0.
\end{align}
Multiplying \eqref{5.5} and \eqref{5.6} by the positive constants $p$ and $q$, respectively,
adding both inequalities, we can obtain
\begin{align*}
&p\|V_1(\cdot,t)\|_{L^1_{w_1}(\mathbb{R})}+q\|V_2(\cdot,t)\|_{L^1_{w_1}(\mathbb{R})}
-[p f_1(c,\gamma)+q \beta_2]\int_0^t\int_{-\infty}^\infty e^{-\mu (t-s)}w_1V_1(\xi,s)d\xi ds\nonumber\\
&-[p\beta_1+q f_2(c,\gamma)]\int_0^t\int_{-\infty}^\infty e^{-\mu (t-s)}w_1V_2(\xi,s)d\xi ds
\leq (pC_1+qC_2)e^{-\mu t}.
\end{align*}
Then, taking $\mu=0$, it follows that
\begin{align*}
&p\|V_1(\cdot,t)\|_{L^1_{w_1}(\mathbb{R}) }+q\|V_2(\cdot,t)\|_{L^1_{w_1}(\mathbb{R}) }
-[p f_1(c,\gamma)+q \beta_2]\int_0^t \|V_1(\cdot,s)\|_{L^1_{w_1}} ds\\
&-[q f_2(c,\gamma)+p\beta_1]\int_0^t \|V_2(\cdot,s)\|_{L^1_{w_1}} ds \leq pC_1|_{\mu=0}+qC_2|_{\mu=0}.
\end{align*}
By taking $\mu>0$ and small enough, it follows that
\begin{align*}
-p f_1(\gamma,c)-q\beta_2-p\mu>0  ~and~ -q f_2(\gamma,c)-p\beta_1-q\mu>0.
\end{align*}
Then we obtain the key energy estimate \eqref{5.7}.
This completes the proof.
\end{proof}

Using the $L^1_{\omega_1}$-estimate of Lemma \ref{lem5.2}, we further have the following $L^1$-estimate.
\begin{lem}\label{lem5.3}
Assume that {\rm (H1)}--{\rm(H4)} hold. Then, for any $c>c^*$,  there exists positive constants $ \mu ,\ \xi_0$ and $ C $ such that
\begin{align}\label{L1}
e^{\mu t}(\|V_1(\cdot,t)\|_{L^1 (\mathbb{R})}+\|V_2(\cdot,t)\|_{L^1 (\mathbb{R})})\leq C,\ \forall t\geq 0.
\end{align}
\end{lem}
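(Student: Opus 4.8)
The plan is to reduce the unweighted $L^1$ bound to an ordinary differential inequality for the two quantities $A_i(t):=\|V_i(\cdot,t)\|_{L^1(\mathbb R)}$, and then to exploit the sign structure of \rm{(H4)} near the equilibrium $\mathbf K$. Since we treat the upper perturbation $V_i=V_i^+\ge 0$ and $\partial_{jk}h,\partial_{jk}g\le 0$ by \rm{(H4)}, the quadratic remainders in \eqref{5.3} are nonpositive, so the differential inequalities \eqref{5.4} are available. First I would integrate each inequality of \eqref{5.4} over $\mathbb R$. Because $V_i\in H^1(\mathbb R)$ decays at $\pm\infty$ and is integrable, the transport integral $\int_{\mathbb R}\partial_\xi V_i\,d\xi$ vanishes, and by translation invariance of the Lebesgue integral the discrete diffusion integral $\int_{\mathbb R}\mathcal D[V_i]\,d\xi$ also vanishes. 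Using $A_i=\int_{\mathbb R}V_i\,d\xi$, this leaves
\[\frac{d}{dt}A_1\le\int_{\mathbb R}\big(h_1(\Phi)V_1+h_2(\Phi)V_2\big)\,d\xi,\qquad \frac{d}{dt}A_2\le\int_{\mathbb R}\big(g_1(\Phi)V_1+g_2(\Phi)V_2\big)\,d\xi,\]
where $h_i,g_i$ denote the first partial derivatives.

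The difficulty is that the coefficients $h_i(\Phi(\xi)),g_i(\Phi(\xi))$ depend on $\xi$, and for large $\xi$ the weight $\omega_1$ no longer controls the plain $L^1$ norm. I would therefore split each integral at $\xi_0$. On $(-\infty,\xi_0]$ one has $\omega_1(\xi)=e^{-\gamma(\xi-\xi_0)}\ge 1$, so $\int_{-\infty}^{\xi_0}V_i\,d\xi\le\|V_i\|_{L^1_{\omega_1}}$; as $h_i,g_i$ are bounded on the compact range of $\Phi$, the contribution of this region is bounded by $C(\|V_1\|_{L^1_{\omega_1}}+\|V_2\|_{L^1_{\omega_1}})$, which decays like $e^{-\mu t}$ by Lemma \ref{lem5.2}. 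On $(\xi_0,\infty)$ I would use $\Phi(\xi)\to\mathbf K$ as $\xi\to\infty$: choosing $\xi_0$ large, continuity gives $h_1(\Phi)<\bar\alpha_1+\delta$, $h_2(\Phi)<\bar\beta_1+\delta$, $g_1(\Phi)<\bar\beta_2+\delta$, $g_2(\Phi)<\bar\alpha_2+\delta$ there, for any prescribed small $\delta>0$. Since $V_i\ge 0$, this bounds the far-field contribution by the corresponding linear combinations of $B_i:=\int_{\xi_0}^\infty V_i\,d\xi$, and writing $B_i=A_i-\int_{-\infty}^{\xi_0}V_i\,d\xi$ (the subtracted part again $O(e^{-\mu t})$ and entering with a favorable sign because $\bar\alpha_i+\delta<0$ while $\bar\beta_i+\delta>0$) I may replace $B_i$ by $A_i$ up to an exponentially small error.

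Combining these estimates yields the closed linear differential inequality
\[\frac{d}{dt}\begin{pmatrix}A_1\\ A_2\end{pmatrix}\le\begin{pmatrix}\bar\alpha_1+\delta & \bar\beta_1+\delta\\ \bar\beta_2+\delta & \bar\alpha_2+\delta\end{pmatrix}\begin{pmatrix}A_1\\ A_2\end{pmatrix}+\begin{pmatrix}\tilde R_1\\ \tilde R_2\end{pmatrix},\qquad \tilde R_i\le Ce^{-\mu t}.\]
Adding the two rows and invoking the conditions $\bar\alpha_1+\bar\beta_2<0$ and $\bar\alpha_2+\bar\beta_1<0$ of \rm{(H4)} — which are precisely the negativity of the column sums of this matrix — I obtain, for $\delta$ small, $\frac{d}{dt}(A_1+A_2)\le-\nu(A_1+A_2)+Ce^{-\mu t}$ with $\nu>0$. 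A Gronwall/comparison argument then gives $A_1+A_2\le Ce^{-\mu t}$ (after possibly shrinking $\mu$ to avoid resonance with $\nu$), which is exactly \eqref{L1}. The main obstacle is the far-field region $(\xi_0,\infty)$, where the weighted estimate of Lemma \ref{lem5.2} is useless and decay must instead be extracted purely from the linearization at $\mathbf K$; the precise sign conditions in \rm{(H4)} are what render the summed inequality dissipative and let the whole argument close.
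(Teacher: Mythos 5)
Your proposal is correct and follows essentially the same route as the paper's proof: integrate the differential inequalities \eqref{5.4} over $\mathbb{R}$ (the transport and discrete diffusion terms vanishing as you say), split the remaining integral at $\xi_0$, absorb the region $\xi\le\xi_0$ via the weighted estimate of Lemma \ref{lem5.2} using $\omega_1\ge 1$ there, and exploit on $[\xi_0,\infty)$ exactly the (H4) conditions $\bar\alpha_1+\bar\beta_2<0$ and $\bar\alpha_2+\bar\beta_1<0$ after summing the two component inequalities. The only cosmetic difference is presentational: you keep a differential inequality for $A_1+A_2$ and close it with Gronwall, whereas the paper multiplies by the integrating factor $e^{\mu t}$, integrates in time, and simply drops the resulting nonnegative far-field integral (with $\mu$ small enough that $\mathbf{F}_i+\mathbf{G}_i>0$ for $\xi\ge\xi_0$) --- the two formulations are equivalent.
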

\begin{proof}
Multiplying the equations \eqref{5.4} by $ e^{\mu t} $ and integrating it over $ \mathbb{R}\times[0,t] $ with respect to $\xi$ and $t$,
since $V_i\in L_\omega^1(\mathbb R)\cap H^1(\mathbb R)$,
$\{c e^{\mu t}V_i\}|_{\xi=-\infty}^{\infty}=0\,\,(i=1,2)$,
we can obtain
\begin{align}\label{5.8}
0 &\ge\int_0^t\int_{-\infty}^\infty\big( \{e^{\mu s}V_1(\xi,s)\}_s-\mu e^{\mu s}V_1(\xi,s)+\{ce^{\mu s}V_1(\xi,s)\}_\xi
-e^{\mu s}[d_1{\mathcal D}[V_1](\xi,s)\nonumber\\
&\ \qquad\qquad\quad +h_1(\Phi(\xi))V_1(\xi,s)+h_2(\Phi(\xi))V_2(\xi,s)]\big)d\xi ds\nonumber\\
&= e^{\mu t}\|V_1(\cdot,t)\|_{L^1(\mathbb{R}) }-\|V_{1}(\cdot,0)\|_{L^1(\mathbb{R})} \nonumber\\
&\qquad\qquad\qquad\qquad\  +\int_0^t\int_{-\infty}^\infty e^{\mu s}\big(\mathbf{F}_1(\xi)V_1(\xi,s)+\mathbf{F}_2(\xi)V_2(\xi,s)]\big) d\xi ds,
\end{align}
where $\mathbf{F}_1(\xi):=-\mu-h_1(\Phi(\xi))$ and $\mathbf{F}_2(\xi):=-h_2(\Phi(\xi))$.
Since $\omega_1(\xi)\geq1$ for $\xi\leq\xi_0$, by Lemma \ref{lem5.2}, we can obtain
\begin{align}\label{5.9}
&\big|\int_{0}^{t}\int_{-\infty}^{\xi_0}e^{\mu s}\big(\mathbf{F}_1(\xi)V_1(\xi,s)+\mathbf{F}_2(\xi)V_2(\xi,s)\big)d\xi ds\big|\nonumber\\
\le&C_3\int_{0}^{t}e^{\mu s}\big(\|V_1(\cdot,s)\|_{L_{\omega_1}^1(-\infty,\xi_0)}+\|V_2(\cdot,s)\|_{L_{\omega_1}^1(-\infty,\xi_0)}\big)ds\le {C_4},
\end{align}
for some positive constants $C_3$ and $C_4$. Then it follows from \eqref{5.8} and \eqref{5.9} that
\begin{align}\label{5.10}
&e^{\mu t}\|V_1(\cdot,t)\|_{L^1(\mathbb{R}) }+\int_0^t\int_{\xi_0}^\infty e^{\mu s}\big(\mathbf{F}_1(\xi)V_1(\xi,s)+\mathbf{F}_2(\xi)V_2(\xi,s)\big) d\xi ds\le C_5,
\end{align}
for some constant $C_5>0$. Similarly, there exists a constant $C_6>0$ such that
\begin{align}\label{5.11}
&e^{\mu t}\|V_2(\cdot,t)\|_{L^1(\mathbb{R}) }+\int_0^t\int_{\xi_0}^\infty e^{\mu s}\big(\mathbf{G}_1(\xi)V_1(\xi,s)+\mathbf{G}_2(\xi)V_2(\xi,s)\big) d\xi ds\le C_6,
\end{align}
where $\mathbf{G}_1(\xi):=-g_1(\Phi(\xi))$ and $\mathbf{G}_2(\xi):=-\mu-g_2(\Phi(\xi))$.
Summing \eqref{5.10} and \eqref{5.11}, there exists a constant $C>0$ such that
\begin{align}\label{5.111}
\|V_1(\cdot,t)\|_{L^1(\mathbb{R}) }+\|V_2(\cdot,t)\|_{L^1 (\mathbb{R})}
+\int_0^t\int_{\xi_0}^{\infty} &e^{-\mu (t-s)}\big[\big(\mathbf{F}_1(\xi)
+\mathbf{G}_1(\xi)\big)U(\xi,s)+\nonumber\\
&\big(\mathbf{F}_2(\xi)
+\mathbf{G}_2(\xi)\big)V(\xi,s)\big]d\xi ds\le Ce^{-\mu t}.
\end{align}
By the assumption (H4), we have
\begin{align*}
\lim_{\xi\to\infty}\big(\mathbf{F}_1(\xi)+\mathbf{G}_1(\xi)\big)=&-h_1({\bf K})-g_1({\bf K})=-\bar{\alpha}_1-\bar{\beta}_2>0,\\
\lim_{\xi\to\infty}\big(\mathbf{F}_2(\xi)+\mathbf{G}_2(\xi)\big)=&-h_2({\bf K})-g_2({\bf K})=-\bar{\beta}_1-\bar{\alpha}_2>0.
\end{align*}
Therefore, choosing $\xi_0$ large enough and $\mu$ small enough, we have
$\mathbf{F}_1(\xi)+\mathbf{G}_1(\xi),\mathbf{F}_2(\xi)+\mathbf{G}_2(\xi)>0$ for $\xi\geq\xi_0$.
Hence, the assertion of this lemma follows from \eqref{5.111}.
\end{proof}

\subsection{$L^2$-energy estimate}
Based on the $L_{\omega_1}^1$-estimate of Lemma \ref{lem5.2}, we further have the following $L^2$-estimate.
\begin{lem}\label{lem5.4}
Assume that {\rm (H1)}--{\rm(H4)} hold. For any $c>c^*$, there exist positive constants  $\xi_0$
{\rm(}for the weight function $w(\xi)${\rm)} and $C$ such that
\begin{align}\label{L21}
\|V_1(\cdot,t)\|^2_{L^2(\mathbb{R})}+\|V_2(\cdot,t)\|^2_{L^2(\mathbb{R})}\le C,\ \forall t\ge0.
\end{align}
\end{lem}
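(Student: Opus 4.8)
The plan is to run the classical $L^2$ energy method directly on the perturbation system \eqref{5.3}: multiply its first equation by $V_1(\xi,t)$, its second by $V_2(\xi,t)$, sum, and integrate over $\mathbb R$. The transport terms drop out since $\int_{\mathbb R}V_i\partial_\xi V_i\,d\xi=\tfrac12\int_{\mathbb R}\partial_\xi(V_i^2)\,d\xi=0$, using that $V_i(\cdot,t)\in H^1(\mathbb R)$ vanishes at $\pm\infty$. For the discrete diffusion terms, a shift of the integration variable together with translation invariance of the Lebesgue integral gives $\int_{\mathbb R}V_i\mathcal D[V_i]\,d\xi=2\int_{\mathbb R}V_i(\xi)V_i(\xi+1)\,d\xi-2\|V_i(\cdot,t)\|_{L^2}^2\le0$ by Cauchy--Schwarz, so $-d_i\int_{\mathbb R}V_i\mathcal D[V_i]\,d\xi\ge0$ and these terms may be retained on the left with a favourable sign. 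This reduces the energy identity to a differential inequality for $E(t):=\|V_1(\cdot,t)\|_{L^2}^2+\|V_2(\cdot,t)\|_{L^2}^2$ whose right-hand side is the sum of a linear part and the quadratic Taylor remainders.

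The remainder terms are disposed of purely by sign. Since $V_1,V_2\ge0$ by \eqref{5.1} and $\partial_{ij}h(v),\partial_{ij}g(v)\le0$ on $\mathbf I$ by (H4), each expression $\tfrac12 V_1[h_{11}V_1^2+h_{22}V_2^2+2h_{12}V_1V_2]$ and $\tfrac12 V_2[g_{11}V_1^2+g_{22}V_2^2+2g_{12}V_1V_2]$ is nonpositive and can simply be discarded. It then remains to control the linear contribution $\int_{\mathbb R}\big[h_1(\Phi)V_1^2+(h_2(\Phi)+g_1(\Phi))V_1V_2+g_2(\Phi)V_2^2\big]\,d\xi$, which I would split at the threshold $\xi_0$ used in the weight. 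On $(\xi_0,\infty)$, where $\Phi(\xi)$ is close to $\mathbf K$, the coefficients are close to $\bar\alpha_1,\bar\alpha_2$ and $\bar\beta_1+\bar\beta_2$; applying Young's inequality $V_1V_2\le\tfrac12(V_1^2+V_2^2)$ (valid because $\bar\beta_1,\bar\beta_2>0$) and invoking $2\bar\alpha_1+\bar\beta_1+\bar\beta_2<0$ and $2\bar\alpha_2+\bar\beta_1+\bar\beta_2<0$ from (H4), I obtain that for $\xi_0$ large enough there is $\delta>0$ with $\int_{\xi_0}^\infty[\cdots]\,d\xi\le-\delta\int_{\xi_0}^\infty(V_1^2+V_2^2)\,d\xi\le0$.

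The delicate region is $(-\infty,\xi_0)$, where the linearization at $\mathbf 0$ is \emph{not} dissipative (near $\mathbf 0$ one only has $\alpha_1\alpha_2<\beta_1\beta_2$ from (H3)), so the quadratic form may be positive. Here I would exploit the a priori bound $0\le V_i\le K_i$ together with $\omega_1(\xi)\ge1$ for $\xi\le\xi_0$: as the coefficients are bounded on $(-\infty,\xi_0]$, one has $\int_{-\infty}^{\xi_0}[\cdots]\,d\xi\le C\int_{-\infty}^{\xi_0}(V_1^2+V_2^2)\,d\xi\le C\big(K_1\|V_1(\cdot,t)\|_{L^1_{\omega_1}(-\infty,\xi_0)}+K_2\|V_2(\cdot,t)\|_{L^1_{\omega_1}(-\infty,\xi_0)}\big)$, since $V_i^2\le K_iV_i\le K_i\omega_1 V_i$ on that half-line. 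By Lemma \ref{lem5.2} this quantity decays like $e^{-\mu t}$. Combining the two regions gives $\tfrac{d}{dt}E(t)\le Ce^{-\mu t}$, and integrating in $t$ yields $E(t)\le E(0)+C/\mu$, which is the desired bound \eqref{L21}.

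I expect the main obstacle to be precisely this control on $(-\infty,\xi_0)$: because the $L^2$ mass there cannot be absorbed by the energy's own dissipation, it must be bootstrapped from the weighted $L^1$ decay of Lemma \ref{lem5.2}, and this bootstrap is what forces us to use both the boundedness $0\le V_i\le K_i$ and the choice $\gamma=\lambda_1(c)+\varepsilon$ of the weight. The estimate on $(\xi_0,\infty)$, by contrast, is routine once (H4) and Young's inequality are in hand.
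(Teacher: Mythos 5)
Your proposal is correct and follows essentially the same route as the paper: drop the quadratic Taylor remainders by sign using (H4) and $V_i\ge 0$, split the linear quadratic form at $\xi_0$, handle the far field $(\xi_0,\infty)$ via Young's inequality $V_1V_2\le\tfrac12(V_1^2+V_2^2)$ together with $2\bar\alpha_i+\bar\beta_1+\bar\beta_2<0$, and absorb the near field $(-\infty,\xi_0)$ through $V_i^2\le K_i\omega_1 V_i$ and the exponentially decaying $L^1_{\omega_1}$ bound of Lemma \ref{lem5.2}. The only cosmetic differences are that you phrase the argument as a differential inequality for the summed energy $E(t)$ rather than integrating each component's inequality separately and adding, and that you verify explicitly (via translation invariance and Cauchy--Schwarz) the dissipativity $\int_{\mathbb R}V_i\mathcal D[V_i]\,d\xi\le 0$, which the paper leaves implicit.
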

\begin{proof}
Let's multiply the equations \eqref{5.4} by $ V_1(\xi,t) $  and integrating it over $ \mathbb{R}\times[0,t] $ with respect to $\xi$ and $t$. Since $V_i\in L_\omega^1(\mathbb R)\cap H^1(\mathbb R)$,
$\{\frac{1}{2}c V_i^2\}|_{-\infty}^{\infty}=0\,\,(i=1,2)$. Thus, we can obtain that
\begin{align*}
0 &\ge\int_0^t\int_{-\infty}^\infty\big( \{\frac{1}{2}V_1^2(\xi,s)\}_s+\{\frac{1}{2}cV_1^2(\xi,s)\}_\xi -V_1(\xi,s)[d_1{\mathcal D}[V_1](\xi,s)+\nonumber\\
&\qquad\qquad\quad\ h_1(\Phi(\xi))V_1(\xi,s)+h_2(\Phi(\xi))V_2(\xi,s)]\big)d\xi ds\nonumber\\
&=\frac{1}{2}\big(\|V_1(\cdot,t)\|_{L^2(\mathbb{R})}-\|V_{1}(\cdot,0)\|_{L^2(\mathbb{R})}\big)\nonumber\\
&\qquad+\int_0^t\int_{-\infty}^\infty\big(-h_1(\Phi(\xi))V_1^2(\xi,s)-h_2(\Phi(\xi))V_1((\xi,s)V_2(\xi,s)\big)d\xi ds \nonumber\\
&\ge\frac{1}{2}\big(\|V_1(\cdot,t)\|_{L^2(\mathbb{R})}-\|V_{1}(\cdot,0)\|_{L^2(\mathbb{R})}\big)+\int_0^t\int_{-\infty}^\infty\big(\mathbf{Q}_1(\xi)V_1^2(\xi,s)+\mathbf{Q}_2(\xi)V_2^2(\xi,s)\big)d\xi ds,
\end{align*}
where $\mathbf{Q}_1(\xi):=-h_1(\Phi(\xi))-\frac{1}{2}h_2(\Phi(\xi))$ and $\mathbf{Q}_2(\xi):=-\frac{1}{2}h_2(\Phi(\xi))$.
Since $\omega_1(\xi)\geq1$ for $\xi\leq\xi_0$ and $0\leq V_1(\xi,t)\leq K_1$, Lemma \ref{lem5.3} implies that
\begin{align}\label{5.13}
\int_{-\infty}^{\xi_0}V_1^2(\xi,t)d\xi\leq K_1\int_{-\infty}^{\xi_0}\omega(\xi)V_1(\xi,t)d\xi
\leq K_1\|V_1(\cdot,t)\|_{L_{\omega_1}^1(\mathbb{R})}\leq K_1 C e^{-\mu t}.
\end{align}
Similarly, we can obtain
\begin{equation}\label{5.14}
\int_{-\infty}^{\xi_0}V_2^2(\xi,t)d\xi\leq K_2 C e^{-\mu t}.
\end{equation}
Then it follows that
\begin{align}\label{5.15}
&\big|\int_{0}^{t}\int_{-\infty}^{\xi_0}e^{\mu s}\big(\mathbf{Q}_1(\xi)V_1^2(\xi,s)+\mathbf{Q}_2(\xi)V_2^2(\xi,s)\big)d\xi ds\big|\nonumber\\
\le&C_7\int_{0}^{t}\int_{-\infty}^{\xi_0}V_1^2(\xi,s)+V_2^2(\xi,s)d\xi ds\leq C_8,
\end{align}
for some positive constants $C_7$ and $C_8$. Moreover, we have
\begin{align}\label{5.16}
\|V_1(\cdot,t)\|_{L^2(\mathbb{R})}^2+2\int_0^t\int_{\xi_0}^\infty \big(\mathbf{Q}_1(\xi)V_1^2(\xi,s)+\mathbf{Q}_2(\xi)V_2^2(\xi,s)\big) d\xi ds\leq C_9,
\end{align}
for some positive constants $C_9$. Similarly, there exists $C_{10}>0$ such that
\begin{align}\label{5.17}
\|V_2(\cdot,t)\|_{L^2(\mathbb{R})}^2+2\int_0^t\int_{\xi_0}^\infty \big(\mathbf{P}_1(\xi)V_1^2(\xi,s)+\mathbf{P}_2(\xi)V_2^2(\xi,s)\big) d\xi ds\leq C_{10},
\end{align}
where $\mathbf{P}_1(\xi):=-\frac{1}{2}g_1(\Phi(\xi))$ and $\mathbf{P}_2(\xi):=-\frac{1}{2}g_1(\Phi(\xi))-g_2(\Phi(\xi))$.
Summing \eqref{5.16} and  \eqref{5.17}, it follows
\begin{align}\label{5.18}
\|V_1(\cdot,t)\|_{L^2(\mathbb{R})}^2+\|V_2(\cdot,t)\|_{L^2(\mathbb{R})}^2+2\int_0^t\int_{\xi_0}^\infty& \big(\big(\mathbf{P}_1(\xi)+\mathbf{Q}_1(\xi)\big)V_1^2(\xi,s)+ \nonumber\\
&\big(\mathbf{P}_2(\xi)+\mathbf{Q}_2(\xi)\big)V_2^2(\xi,s)\big) d\xi ds\leq C_{11}.
\end{align}
for some $C_{11}>0$. By the assumption {\rm(H4)}, we have
\begin{align*}
\lim_{\xi\to\infty}\big(\mathbf{Q}_1(\xi)+\mathbf{P}_1(\xi)\big)&=-h_1({\bf K})-\frac{1}{2}h_2({\bf K})-\frac{1}{2}g_1({\bf K})=-\frac{1}{2}\big(2\bar{\alpha}_1+\bar{\beta}_2+\bar{\beta}_1\big)>0,\nonumber\\
\lim_{\xi\to\infty}\big(\mathbf{Q}_2(\xi)+\mathbf{P}_2(\xi)\big)&=-\frac{1}{2}h_2({\bf K})-\frac{1}{2}g_1({\bf K})-g_2({\bf K}) =-\frac{1}{2}\big(\bar{\beta}_1+\bar{\beta}_2+2\bar{\alpha}_2\big)>0.
\end{align*}
Therefore, choosing $\xi_0$ large enough, we have $\mathbf{Q}_1(\xi)+\mathbf{P}_1(\xi),~\mathbf{Q}_2(\xi)+\mathbf{P}_2(\xi)>0$ for $\xi\geq\xi_0$.
Then the estimate \eqref{L21} follows from \eqref{5.18}.
The proof is complete.
\end{proof}
By the same procedure, we can also obtain the $L^2$-estimate for the derivatives $\partial_{\xi}V_1(\cdot,t)$ and $\partial_{\xi}V_2(\cdot,t)$.
Indeed, differentiating the system \eqref{5.4} with respect to $\xi$, we can obtain
\begin{equation*}
\left\{
\begin{array}{ll}
\partial_{t\xi} V_1(\xi,t)+c\partial_{\xi\xi} V_1(\xi,t)-d_1{\mathcal D}[\partial_{\xi}V_1](\xi,t)
-\nabla h(X(\xi,t)+\Phi(\xi))\partial X_{\xi}(\xi,t)\leq 0,\medskip\\
\partial_{t\xi} V_2(\xi,t)+c\partial_{\xi\xi} V_2(\xi,t)-d_2{\mathcal D}[\partial_{\xi}V_2](\xi,t)
-\nabla g(X(\xi,t)+\Phi(\xi))\partial X_{\xi}(\xi,t)\leq 0.
\end{array}
\right.
\end{equation*}
Similar to the proof of Lemma \ref{lem5.4}, we can obtain the following $L^2$-estimate.
\begin{lem}\label{lem5.5}
Assume that {\rm (H1)}--{\rm(H4)} hold. Then, for any $c>c^*$, there exist positive constants $ \xi_0 $ and $ C $ such that
\begin{align*}
\|\partial_{\xi}V_1(\cdot,t)\|^2_{L^2(\mathbb{R})}+\|\partial_{\xi}V_2(\cdot,t)\|^2_{L^2(\mathbb{R})}\le C,\ \forall t\ge 0.
\end{align*}
\end{lem}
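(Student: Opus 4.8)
The plan is to repeat the weighted--unweighted $L^2$ scheme of Lemma~\ref{lem5.4}, now for the derivatives $W_i:=\partial_\xi V_i$, $i=1,2$, but working from the \emph{identity} obtained by differentiating the full system \eqref{5.3} in $\xi$ rather than from a differential inequality. Differentiating the first equation of \eqref{5.3} and moving the linear part to the left gives
\begin{align*}
\partial_t W_1+c\partial_\xi W_1-d_1\mathcal{D}[W_1]-h_1(\Phi+X)W_1-h_2(\Phi+X)W_2=R_1,
\end{align*}
with $R_1:=[\nabla h(\Phi+X)-\nabla h(\Phi)]\Phi'$, and an analogous identity with remainder $R_2$ for $W_2$. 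By (H4) the second derivatives of $h,g$ are nonpositive on $\mathbf I$, while $X\ge 0$ and $\Phi'\ge 0$ since the front is increasing; hence $R_1,R_2\le 0$, which is exactly the sign recorded in the differential inequality stated before the lemma. Keeping the identity is essential here, because $W_i$ is sign-indefinite and one cannot legitimately multiply an inequality by $W_i$.

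First I would multiply the $W_i$-identities by $W_i$ and integrate over $\mathbb R\times[0,t]$, arguing at the level of the mollified solutions $V_i^\varepsilon\in C([0,\infty),W_\omega^{2,1}(\mathbb R)\cap H^2(\mathbb R))$ so that $\partial_\xi V_i^\varepsilon\in H^1(\mathbb R)$ is continuous and vanishes at $\pm\infty$; then the flux terms $\{\tfrac12 cW_i^2\}|_{\xi=-\infty}^{\infty}$ drop and one passes to the limit $\varepsilon\to 0$. The discrete diffusion contributes $d_i\int_0^t\int_{\mathbb R}[W_i(\xi+1)-W_i(\xi)]^2\,d\xi\,ds\ge 0$, which is discarded, and the reaction terms are bounded below exactly as in Lemma~\ref{lem5.4} via $h_2,g_1\ge 0$ (H2) and $W_1W_2\le\tfrac12(W_1^2+W_2^2)$, producing quadratic forms whose coefficients are the analogues of $\mathbf Q_i,\mathbf P_i$ but evaluated at $\Phi+X$. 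Since $\Phi(\xi)+X(\xi,t)\to\mathbf K$ as $\xi\to+\infty$, these coefficient sums converge to $-\tfrac12(2\bar\alpha_1+\bar\beta_1+\bar\beta_2)$ and $-\tfrac12(\bar\beta_1+\bar\beta_2+2\bar\alpha_2)$, which are positive by (H4); so for $\xi_0$ large the integral over $[\xi_0,\infty)$ has a favorable sign, just as before.

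The hard part is the left tail. In Lemma~\ref{lem5.4} the term $\int_0^t\int_{-\infty}^{\xi_0}V_i^2$ was tamed by $V_i^2\le K_iV_i$ and the $L^1_{\omega_1}$-decay of Lemma~\ref{lem5.2}, but no pointwise bound of this kind is available for $W_i$. I would therefore first prove a weighted $L^2_{\omega_1}$-estimate for the derivatives, mirroring Lemma~\ref{lem5.2}: multiplying the $W_i$-identities by $\omega_1 W_i$ (with $\omega_1=e^{-\gamma(\xi-\xi_0)}$, so $\omega_1'=-\gamma\omega_1$), the convection term yields the gain $\tfrac{c\gamma}{2}\int\omega_1 W_i^2$ and the weighted discrete diffusion contributes the factor $d_i(e^{\gamma}+e^{-\gamma}-2)$ to the effective left-end coefficient, exactly the combination entering $f_i(\gamma,c)$ in Lemma~\ref{lem5.2}. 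Because $f_i(\gamma,c)<0$ for $\gamma=\lambda_1(c)+\varepsilon$, combining the two component estimates with the positive multipliers $p,q$ furnished by Lemma~\ref{lem4.1} gives weighted coercivity near $-\infty$. The remainders are harmless: $|R_i|\le C|X|\,|\Phi'|$, so by Young's inequality $\int_0^t\int\omega_1|R_iW_i|\le\eta\int_0^t\int\omega_1 W_i^2+\tfrac{C}{\eta}\int_0^t\int\omega_1|X|^2|\Phi'|^2$, and using $0\le V_i\le K_i$ together with Lemma~\ref{lem5.2} one has $\int\omega_1|X|^2|\Phi'|^2\le C\|\Phi'\|_\infty^2(\|V_1\|_{L^1_{\omega_1}}+\|V_2\|_{L^1_{\omega_1}})\le Ce^{-\mu t}$. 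Absorbing the $\eta$-term into the weighted dissipation yields $\int_{-\infty}^{\xi_0}W_i^2\le\int_{\mathbb R}\omega_1 W_i^2\le Ce^{-\mu t}$, hence $\int_0^t\int_{-\infty}^{\xi_0}(W_1^2+W_2^2)\,d\xi\,ds\le C$.

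With the left tail controlled, I would return to the unweighted identities and close as in Lemma~\ref{lem5.4}: the $[\xi_0,\infty)$ term is nonnegative, the $(-\infty,\xi_0]$ term is absorbed by the estimate just obtained, and the remainder $\int_0^t\int R_iW_i$ is handled by the same Young splitting, now bounding $\int|X|^2|\Phi'|^2\le C\|\Phi'\|_\infty^2(\|V_1\|_{L^1}+\|V_2\|_{L^1})\le Ce^{-\mu t}$ via the $L^1$-decay of Lemma~\ref{lem5.3}. Summing the two component inequalities gives $\|W_1(\cdot,t)\|_{L^2(\mathbb R)}^2+\|W_2(\cdot,t)\|_{L^2(\mathbb R)}^2\le C$ for all $t\ge 0$, as claimed. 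The points demanding care are the simultaneous choice of a large $\xi_0$ (for far-field positivity from (H4)) and of the weight rate $\gamma=\lambda_1(c)+\varepsilon$ (for the weighted coercivity), and the treatment of the nonlocal weighted discrete-diffusion term, which must be estimated by a shift-and-Young argument rather than by a clean integration by parts.
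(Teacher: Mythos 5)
Your overall route is the paper's: the paper likewise differentiates the system in $\xi$ and then disposes of the lemma with the single phrase ``similar to the proof of Lemma \ref{lem5.4}''. But you go well beyond that sketch, and in doing so you correctly expose the two places where a literal repetition of Lemma \ref{lem5.4} breaks down. First, $W_i=\partial_\xi V_i$ is sign-indefinite, so one may not multiply the differential inequality displayed before the lemma by $W_i$; keeping the identity with the remainder $R_i=[\nabla h(\Phi+X)-\nabla h(\Phi)]\Phi'$ (resp.\ with $g$), noting $R_i\le 0$ by (H4) but estimating $\int R_iW_i$ by Young's inequality, $|X|^2\le K_1V_1+K_2V_2$ and the decay of Lemmas \ref{lem5.2}--\ref{lem5.3}, is exactly the right repair. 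Second, the tail bound $\int_{-\infty}^{\xi_0}W_i^2$ cannot come from a pointwise bound as in \eqref{5.13}, so a weighted estimate for the derivatives is genuinely needed; your $L^2_{\omega_1}$ step is the natural choice, also correctly handling the weighted discrete diffusion by shift-and-Young. One imprecision there: the $L^2$ computation halves the convection gain ($\tfrac{c\gamma}{2}$) and the shifted-diffusion contribution ($\tfrac{d_i}{2}(e^{\gamma}+e^{-\gamma}-2)$) relative to the zeroth-order terms, so the dissipation matrix has diagonal proportional to $f_i(\gamma,c)+\alpha_i$, not $f_i(\gamma,c)$, and the cross terms must be symmetrized; the multipliers $p,q$ of \eqref{key} are not literally the right ones. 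The fix is immediate: since $\alpha_i<0$, one has $(f_1+\alpha_1)(f_2+\alpha_2)>f_1f_2>\beta_1\beta_2$ by $P(\gamma,c)>0$, so e.g.\ $(p,q)=(\beta_2,\beta_1)$ makes the symmetrized quadratic form negative definite, and your coercivity claim survives.

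The genuine gap is at $t=0$: your weighted derivative estimate starts from $\int_{\mathbb R}\omega_1\big(W_1^2+W_2^2\big)(\xi,0)\,d\xi$, i.e.\ it requires $\partial_\xi V_{i0}\in L^2_{\omega_1}(\mathbb R)$. The hypothesis \eqref{IC} supplies only $V_{i0}\in L^1_\omega(\mathbb R)\cap H^1(\mathbb R)$, which controls neither this weighted norm nor any substitute for it; mollification does not help, since $\|J_\varepsilon\ast\partial_\xi V_{i0}\|_{L^2_{\omega_1}}$ is bounded uniformly in $\varepsilon$ only if $\partial_\xi V_{i0}\in L^2_{\omega_1}$ to begin with, and the transport-plus-discrete-diffusion operator in \eqref{5.3} has no smoothing that could generate weighted integrability for $t>0$. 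Nor can the weight be dropped in the tail: near $\xi=-\infty$ the linearization sits at the unstable state $\mathbf 0$ (by (H3), $\alpha_1\alpha_2<\beta_1\beta_2$), so the unweighted quadratic form is not dissipative there --- which is precisely why the weight exists. As written, then, your argument proves the lemma only under a strengthened initial condition, e.g.\ $\partial_\xi(v_{i0}-\phi_i)\in L^2_{\omega_1}(\mathbb R)$ (or $v_{i0}-\phi_i\in H^1_\omega$). To be fair, the paper's own two-line proof is silent on all of these points and, read literally, would founder on the same tail estimate; your proposal is the more honest account, but it should either flag this extra hypothesis or supply a tail argument that uses only the data in \eqref{IC}.
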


\subsection{Proof of Theorem 2.2}

It is easy to see that
\begin{align}\label{5.20}
\|V_1(\cdot,t)\|_{L^2(\mathbb{R})}^2\leq \sup_{\xi\in\mathbb{R}}|V_1(\xi,t)|\int_{\mathbb{R}}V_1(\cdot,t)d\xi  =\|V_1(\cdot,t)\|_{L^\infty(\mathbb{R})}\cdot \|V_1(\cdot,t)\|_{L^1(\mathbb{R})},
\end{align}
for any $t\ge0$. By the H\"{o}lder inequality, we have
\begin{align*}
V_1^2(\cdot,t)&=2\int_{-\infty}^\xi \partial_{\xi}V_1(\cdot,t) V_1(\cdot,t)d\xi \leq 2\big(\int_{-\infty}^\xi |\partial_{\xi}V_1(\cdot,t)|^2 d\xi\big)^{\frac{1}{2}}
\big(\int_{-\infty}^\xi |V_1(\cdot,t)|^2 d\xi\big)^{\frac{1}{2}}\\
&\leq 2\|V_1(\cdot,t)\|_{L^2(-\infty,\xi)}\cdot \|\partial_{\xi}V_1(\cdot,t)\|_{L^2(-\infty,\xi)},
\end{align*}
for any $ \xi\in\mathbb{R} $ and $ t\geq 0 $. Then it follows that
\begin{align}\label{5.21}
&\|V_1(\cdot,t)\|_{L^\infty(\mathbb{R})}^2\le2 \|V_1(\cdot,t)\|_{L^2(\mathbb{R})}\cdot \|\partial_{\xi}V_1(\cdot,t)\|_{L^2(\mathbb{R})}.
\end{align}
Combing \eqref{5.20} and \eqref{5.21}, we have
$$
\|V_1(\cdot,t)\|_{L^\infty(\mathbb{R})}\leq2^{\frac{2}{3}}\|V_1(\cdot,t)\|_{L^1(\mathbb{R})}^{\frac{1}{3}}
\cdot \|\partial_{\xi}V_1(\cdot,t)\|_{L^2(\mathbb{R})}^{\frac{2}{3}},\ \forall  t\ge 0.
$$
According to Lemmas \ref{lem5.3} and \ref{lem5.5}, there exist positive constants $ \mu_1 $ and $ M_1 $, such that
$$
\|V_1(\cdot,t)\|_{L^\infty(\mathbb{R})}\leq M_1e^{-\frac{1}{3}\mu_1 t},\ \forall t\ge 0.
$$
Similarly, there exist $ \mu_2>0 $ and $ M_2>0 $, such that
$$
\|V_2(\cdot,t)\|_{L^\infty(\mathbb{R})}\leq M_2e^{-\frac{1}{3}\mu_2 t},\ \forall t\ge 0.
$$
Thus, we have
\begin{align*}
\sup_{x\in\mathbb{R}}|v_i^+(x,t)-\phi_i(x+ct)|\leq Ce^{-\mu t},
\forall t\ge 0,\ i=1,2.
\end{align*}
Similarly, we can verify that for any $ c>c^* $, it holds
\begin{align*}
\sup_{x\in\mathbb{R}}|v_i^-(x,t)-\phi_i(x+ct)|\leq Ce^{-\mu t},\ \forall t\ge 0, \ i=1,2.
\end{align*}
Since
$
0\le v_{i}^-(\xi,t)\le v_{i}(\xi,t),\,\,\phi_i(x+ct)\le v_{i}^+(\xi,t)\le K_i,\ i=1,2,
$
the squeezing argument implies that
\begin{align*}
\sup_{x\in\mathbb{R}}|v_i(x,t)-\phi_i(x+ct)|\leq Ce^{-\mu t},\ \forall t\geq0,\ i=1,2.
\end{align*}
This completes the proof of Theorem \ref{thm2.2}.  \qed

\section{Extension to general discrete diffusive system}
In this section, we will generalize the result of Theorem \ref{thm2.2} to the following discrete diffusive system
\begin{align}\label{nsystem}
\partial_tU_{i}(x,t)&=d_i{\mathcal D}[U_i](x,t)+f_i(U_{1}(x,t),\cdots,U_{n}(x,t)),\ \mbox{for }i=1,\cdots,n.
\end{align}
We assume that the conditions (A1)--(A3) hold for \eqref{nsystem}. Since the profile equations of \eqref{nsystem} are the same with those of \eqref{1}, we can have the existence result of traveling wave solutions as the statement of Theorem \ref{existence}. According to Section 4, we know that the results of Lemmas \ref{lem3.2}--\ref{lem4.1} are significant in proving the estimations of Lemmas \ref{lem5.2}--\ref{lem5.5}. Therefore, to obtain the stability of traveling wave solutions of \eqref{nsystem}, we have to generalize the results of Lemma \ref{lem3.2}--\ref{lem4.1}. \medskip

Recently, Hsu and Yang \cite{CHY} generalized the statement of Lemma \ref{lem4.1} to more general cases. Before to cite their results, we first introduce the following notations.\medskip

Let $A=(a_{i,j})\in M_{n\times n}(\mathbb{R})$, for any $1\le k\le n$, we define the submatrix $A_{p_1,p_2,\cdots,p_k}:=(a_{p_i, p_j})_{1\le i,j\le k}$ for any subset $\{p_1,\cdots,p_k\}\subseteq\{1,\cdots,n\}$.
Following these notations, Hsu and Yang \cite{CHY} recently proved the following result.
\begin{lem}\label{lemnsystem}
Let $A=(a_{i,j})\in M_{n\times n}(\mathbb{R})$ with $a_{i,j}\ge 0$ for
$i\neq j$ and $a_{i,i}\le 0$ for $1\le i\le n$. Then the system of
inequalities $Ax< 0$ has a solution $x=(x_1,\cdots,x_n)^T$ with
each $x_i> 0$ if and only if for each $1\le k\le n$,
\begin{equation}
(-1)^{k-1}\det(A_{1,2,\cdots,k,j})>0,\ \mbox{for }k+1\le j\le n. \label{equ:det}
\end{equation}
\end{lem}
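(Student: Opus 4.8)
The plan is to argue by induction on $n$, reducing the $n\times n$ problem to an $(n-1)\times(n-1)$ one by eliminating the first coordinate through a Schur complement. The base case $n=2$ is exactly Lemma \ref{lem4.1} (under the determinant condition $\det A>0$ one checks $a_{11}a_{22}>a_{12}a_{21}\ge0$, so the nonstrict diagonal hypothesis $a_{ii}\le0$ is automatically upgraded to $a_{ii}<0$). Before forming the reduction I would first record that, in both directions of the equivalence, $a_{11}<0$: if $x>0$ solves $Ax<0$ then the first inequality gives $a_{11}x_1<-\sum_{j\ge2}a_{1j}x_j\le0$, whence $a_{11}<0$; and if the minor conditions hold then the level-$1$ condition $\det(A_{1,2})>0$ together with $a_{12}a_{21}\ge0$ forces $a_{11}a_{22}>0$, so $a_{11}<0$. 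With $a_{11}<0$ in hand I form the Schur complement $\hat A=(\hat a_{ij})_{2\le i,j\le n}$, where $\hat a_{ij}:=a_{ij}-a_{i1}a_{1j}/a_{11}$.

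Next I would verify that $\hat A$ inherits the sign hypotheses of the lemma on the index set $\{2,\dots,n\}$. Its off-diagonal entries satisfy $\hat a_{ij}=a_{ij}+a_{i1}a_{1j}/(-a_{11})\ge0$ automatically, while the diagonal entries are $\hat a_{ii}=\det(A_{1,i})/a_{11}$; here the level-$1$ conditions $\det(A_{1,j})>0$ are precisely what make $\hat a_{ii}<0$. The algebraic linchpin is Schur's determinant quotient identity $\det(A_{\{1\}\cup S})=a_{11}\,\det(\hat A_{S})$ for every $S\subseteq\{2,\dots,n\}$. Taking $S=\{2,\dots,k,j\}$ and dividing by $a_{11}<0$ (which flips the sign) converts the level-$k$ condition $(-1)^{k-1}\det(A_{1,2,\cdots,k,j})>0$ for $A$, with $k\ge2$, exactly into the level-$(k-1)$ condition for $\hat A$ after the index shift $i\mapsto i-1$. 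Thus the full family of minor conditions for $A$ is equivalent to the level-$1$ conditions together with the complete family of minor conditions for $\hat A$.

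It then remains to prove the existence equivalence $\exists\,x>0:Ax<0\iff\exists\,x'>0:\hat Ax'<0$. Given a solution $x'$ of the reduced system, I would set $x_1$ slightly above the threshold $(-a_{11})^{-1}\sum_{j\ge2}a_{1j}x_j\ge0$ imposed by the first inequality; since $a_{i1}\ge0$, substituting this value into rows $i\ge2$ reproduces $(\hat Ax')_i$ up to an arbitrarily small positive perturbation, so every inequality holds strictly. Conversely, from $x>0$ with $Ax<0$ the first inequality bounds $\sum_{j\ge2}a_{1j}x_j<-a_{11}x_1$, and feeding this into rows $i\ge2$ yields $(\hat Ax')_i<0$ with $x'=(x_2,\dots,x_n)>0$; moreover the standard sign argument on these reduced inequalities (off-diagonal terms nonnegative) forces $\hat a_{ii}<0$, which is exactly what legitimizes invoking the induction hypothesis on $\hat A$ in the "only if" direction. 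Combining the three equivalences with the inductive hypothesis closes the argument, reducing $n\ge3$ to $n-1\ge2$ and bottoming out at Lemma \ref{lem4.1}.

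The main obstacle I anticipate is establishing that the Schur complement preserves the required sign pattern: off-diagonal nonnegativity is free, but the diagonal entries $\hat a_{ii}=\det(A_{1,i})/a_{11}$ are nonpositive \emph{only} because of the bordered $2\times2$ minor conditions, so this is precisely where the hypotheses enter nontrivially and where the two directions must be handled asymmetrically (in one direction the conditions are assumed, in the other they must be deduced from the reduced inequalities). The accompanying delicate point is the bookkeeping of the index shift so that the alternating-sign conditions for $A$ map correctly onto those for $\hat A$ through the determinant quotient identity and the sign flip from dividing by $a_{11}<0$; once the identity and the negativity of $a_{11}$ are secured, the remaining manipulations are routine.
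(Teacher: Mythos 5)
The paper never proves Lemma \ref{lemnsystem}; it is imported verbatim from the preprint \cite{CHY}, so there is no in-paper argument to compare yours against, and your proposal must be judged on its own merits. On those merits it is correct: securing $a_{11}<0$ in both directions is done properly (from the first inequality, resp.\ from $\det(A_{1,2})>0$ combined with $a_{12}a_{21}\ge0$ and $a_{11},a_{22}\le0$); the identity $\det(A_{\{1\}\cup S})=a_{11}\det(\hat A_S)$ is the right quotient formula since $\hat A_S$ is exactly the Schur complement of $a_{11}$ in $A_{\{1\}\cup S}$; dividing by $a_{11}<0$ converts $(-1)^{k-1}$ into $(-1)^{k-2}$, matching the level shift $k\mapsto k-1$; and your remark that the reduced inequalities force $\hat a_{ii}<0$, which yields the level-one conditions back via $\det(A_{1,i})=a_{11}\hat a_{ii}>0$, is precisely what closes the ``only if'' direction.

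One small repair is needed at the base case: it is not ``exactly'' Lemma \ref{lem4.1}, because that lemma assumes strictly positive off-diagonal entries $a_{ij}>0$, whereas here (and after Schur reduction, even if $A$ had none) off-diagonal entries may vanish. Your parenthetical upgrades only the diagonal. The fix is two lines: in the nonstrict $2\times2$ case, a solution forces $a_{ii}x_i<-a_{ij}x_j\le0$, hence $a_{ii}<0$, and then $Ax<0$ is solvable with $x>0$ iff the interval $\bigl(a_{12}/(-a_{11}),\,(-a_{22})/a_{21}\bigr)$ for $x_1/x_2$ is nonempty (right endpoint $+\infty$ if $a_{21}=0$), which holds iff $\det A>0$. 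With that extended $2\times2$ lemma in place, and since your induction bottoms out at $n=2$ (thereby avoiding the degenerate $n=1$ case, where the condition set is vacuous), the argument is complete.
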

Note that Lemma \ref{lem4.1} is a special case of the above lemma with $n=2$.
Now we consider the profile equation for system \eqref{nsystem}, that is
\begin{equation}\label{npf}
c\phi_i^\prime(\xi)=d_i{\mathcal D}[\phi_i](\xi)+f_i(\phi_1(\xi),\cdots,\phi_n(\xi)),\ i=1,\cdots,n,
\end{equation}
here $\xi=x+ct$ and $U_i(x,t)=\phi_i(x+ct)$ for $i=1,\cdots,n$. Let's set
\begin{center}
$\alpha_{i,j}:=\partial f_i({\bf 0})/\partial u_j$, $\delta_{i,j}:=\alpha_{i,j}$ if $i\ne j$, and
$\delta_{i,i}=\delta_{i,i}(\lambda,c):=d_i(e^{\lambda}+e^{-\lambda}-2)-c\lambda+\alpha_{i,i}$,
\end{center}
for $i,j=1,\cdots,n$.
 Then the characteristic polynomial of \eqref{npf} at {\bf 0} has the form
\begin{align*}
P(\lambda,c)=&\det J(\lambda,c):=\det[\delta_{i,j}].
\end{align*}
\begin{lem}\label{speed}
There exist $c_*>0$ and $0<\lambda_1(c)<\lambda_2(c)$ such that $\delta_{i,i}<0$ for all $\lambda\in(\lambda_1(c),\lambda_2(c))$, $i=1,\cdots, n$, provided that $c>c_*$.
\end{lem}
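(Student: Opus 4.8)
The goal is to show that each diagonal entry $\delta_{i,i}(\lambda,c)$ of the characteristic matrix $J(\lambda,c)$ is negative on a common interval $(\lambda_1(c),\lambda_2(c))$, uniformly for large wave speed $c$. Each $\delta_{i,i}$ is of the same shape as the functions $f_i(\lambda,c)$ appearing in Lemma~\ref{lem3.2}, so the entire analysis reduces to studying a single scalar function of the form $\delta(\lambda,c)=d(e^\lambda+e^{-\lambda}-2)-c\lambda+\alpha$ with $d\ge 0$ and $\alpha<0$.

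\textbf{Approach.} First I would record the basic convexity and boundary behavior of a single diagonal function. For fixed $c$, the map $\lambda\mapsto\delta_{i,i}(\lambda,c)$ is convex (its second derivative $d_i(e^\lambda+e^{-\lambda})\ge 0$), tends to $+\infty$ as $\lambda\to\pm\infty$, and at $\lambda=0$ equals $\alpha_{i,i}<0$ by the sign assumption coming from (A1)--(A2) type hypotheses (each $\partial f_i(\mathbf 0)/\partial u_i<0$). By strict convexity together with $\delta_{i,i}(0,c)<0$, there exist exactly two real roots $\lambda_i^-(c)<0<\lambda_i^+(c)$ with $\delta_{i,i}(\lambda,c)<0$ precisely on the open interval $(\lambda_i^-(c),\lambda_i^+(c))$. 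This is the exact analogue of \eqref{3.2} in the proof of Lemma~\ref{lem3.2}, applied to each index $i$ separately.

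\textbf{Key steps.} The plan is then to produce a \emph{common} subinterval and a threshold speed. Set $\lambda_m^+(c):=\min_{1\le i\le n}\lambda_i^+(c)$ and $\lambda_M^-(c):=\max_{1\le i\le n}\lambda_i^-(c)$, so that on the nonempty interval $(\lambda_M^-(c),\lambda_m^+(c))$ every $\delta_{i,i}(\lambda,c)<0$ simultaneously. It remains to locate a positive subinterval that persists for large $c$. Following Step~2 of Lemma~\ref{lem3.2}, I would test the point $\lambda=1/\sqrt{c}$: a direct expansion gives $d_i(e^{1/\sqrt c}+e^{-1/\sqrt c}-2)=d_i/c+O(c^{-2})$, while $-c\cdot(1/\sqrt c)=-\sqrt c\to-\infty$, so $\delta_{i,i}(1/\sqrt c,c)\to-\infty$ as $c\to\infty$ for each $i$; in particular it is negative for all large $c$. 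Moreover $0<1/\sqrt c<\lambda_i^+(c)$ for each $i$ once $c$ is large (since $\lambda_i^+(c)$ stays bounded away from $0$ because $\delta_{i,i}$ is decreasing near $0$ for large $c$). Thus for all sufficiently large $c$ there is a nontrivial positive interval containing $1/\sqrt c$ on which every diagonal entry is negative. Defining
\begin{equation*}
c_*:=\inf\bigl\{c_0>0:\ \textstyle\bigcap_{i=1}^{n}(\lambda_i^-(c),\lambda_i^+(c))\cap(0,\infty)\neq\emptyset\ \text{for all }c>c_0\bigr\},
\end{equation*}
and taking $\lambda_1(c):=\max\{0,\lambda_M^-(c)\}$ and $\lambda_2(c):=\lambda_m^+(c)$ yields $0<\lambda_1(c)<\lambda_2(c)$ with the required simultaneous negativity for $c>c_*$.

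\textbf{Main obstacle.} The routine convexity facts for each index are immediate; the one point requiring care is the uniformity across $i$ and the claim that the common positive interval is nonempty and stable as $c$ varies. Concretely, I expect the delicate step to be verifying that $\lambda_m^+(c)$ stays strictly above $\lambda_1(c)=\max\{0,\lambda_M^-(c)\}$ for every large $c$, i.e. that the smallest positive root does not collapse into the largest negative root. This is controlled by the asymptotics of the individual roots: as $c\to\infty$ one checks $\lambda_i^+(c)\to\lambda_i^{+,\infty}>0$ determined by $d_i(e^\lambda+e^{-\lambda}-2)+\alpha_{i,i}=c\lambda$, while $\lambda_i^-(c)\to 0^-$, so the gap $\lambda_m^+(c)-\lambda_1(c)$ remains bounded below by a positive constant for large $c$. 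Making this quantitative — essentially transcribing the four-step argument of Lemma~\ref{lem3.2} index-by-index and then intersecting — is the heart of the proof, after which the conclusion follows directly.
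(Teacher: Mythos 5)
Your argument hinges on the claim that $\delta_{i,i}(0,c)=\alpha_{i,i}<0$ for every $i$, which you attribute to ``(A1)--(A2) type hypotheses.'' That is a genuine gap: in the general $n$-component setting of Section 5 the standing assumptions are (A1)--(A4) together with \eqref{iff}, and none of them forces the diagonal entries $\alpha_{i,i}=\partial f_i({\bf 0})/\partial u_i$ to be negative. (The condition $\alpha_i<0$ is part of (H3), which belongs to the two-component case and is exactly what the extension does not assume; for monostable problems such as the discrete Fisher equation one has $f'(0)>0$, i.e.\ a positive diagonal entry.) Once some $\alpha_{i,i}\ge 0$ is allowed, your convexity picture collapses: $\delta_{i,i}(\cdot,c)$ need not be negative at $\lambda=0$, the two roots $\lambda_i^-(c)<0<\lambda_i^+(c)$ need not exist or straddle the origin (for small $c$ the function can be positive for all $\lambda$), and the intersection you form can be empty. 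Note also that if all $\alpha_{i,i}<0$ were true, the conclusion would hold for \emph{every} $c>0$ and no threshold would be needed; the very presence of $c_*$ in the statement signals that nonnegative diagonal entries are the case being handled. Two secondary slips: the asymptotics ``$\lambda_i^+(c)\to\lambda_i^{+,\infty}>0$'' is false --- the positive root of $d_i(e^{\lambda}+e^{-\lambda}-2)+\alpha_{i,i}=c\lambda$ diverges like $\ln c$ as $c\to\infty$ (inessential here); and under your own sign assumption $\lambda_M^-(c)<0$, so your choice $\lambda_1(c)=\max\{0,\lambda_M^-(c)\}=0$ violates the strict inequality $0<\lambda_1(c)$ (trivially repaired by taking $\lambda_1(c)$ slightly positive).

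The fix is already contained in your test-point observation: the evaluation $\delta_{i,i}(1/\sqrt{c},c)=d_i/c+O(c^{-2})-\sqrt{c}+\alpha_{i,i}\to-\infty$ requires no sign hypothesis on $\alpha_{i,i}$, so for all large $c$ each $\delta_{i,i}(1/\sqrt{c},c)<0$, and continuity in $\lambda$ yields a positive interval around $1/\sqrt{c}$ on which all diagonal entries are simultaneously negative. The paper accomplishes this in one uniform stroke rather than index-by-index: it dominates all diagonal entries at once via $\delta_{i,i}(\lambda,c)\le d_M(e^{\lambda}+e^{-\lambda}-2)-c\lambda+\alpha_M$ with $d_M:=\max_i d_i$ and $\alpha_M:=\max_i\alpha_{i,i}$, defines the explicit threshold
\begin{equation*}
c_*:=\min_{\lambda>0}\frac{d_M(e^{\lambda}+e^{-\lambda}-2)+\alpha_M}{\lambda},
\end{equation*}
and observes that for $c>c_*$ the line $c\lambda$ exceeds the dominating convex function on some interval $(\lambda_1(c),\lambda_2(c))\subset(0,\infty)$, whence all $\delta_{i,i}<0$ there. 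No root analysis of the individual $\delta_{i,i}$, and no intersection of $n$ intervals, is needed.
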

\begin{proof}Let's define
\begin{equation}
c_*:=\min_{\lambda>0}\frac{d_M(e^{\lambda}+e^{-\lambda}-2)+\alpha_M}{\lambda},\  i=1,\cdots,n,
\end{equation}
where $\alpha_M:=\max\{\alpha_{1,1},\cdots,\alpha_{n,n}\}$ and $d_M:=\{d_1,\cdots,d_n\}$.
If $c>c_*$ there exist $0<\lambda_1(c)<\lambda_2(c)$ such that
\begin{equation*}
 c\lambda>d_M(e^{\lambda}+e^{-\lambda}-2)+\alpha_M,\ \mbox{for }\lambda\in(\lambda_1(c),\lambda_2(c)).
 \end{equation*}
 Then, for any $i=1,\cdots,n$, we have
\begin{equation*}
 \delta_{i,i}<d_M(e^{\lambda}+e^{-\lambda}-2)-c\lambda+\alpha_M<0,\ \mbox{for }\lambda\in(\lambda_1(c),\lambda_2(c)).
 \end{equation*}
 The proof is complete.
\end{proof}
As a consequence of Lemmas \ref{lemnsystem} and \ref{speed}, we immediately have the following result.
\begin{lem}\label{root}
Assume all $\delta_{i,j}\ge 0$ with $i\ne j$, $c>c_*$ and $\lambda\in(\lambda_1(c),\lambda_2(c))$. Then the exists a vector $v=(v_1,\cdots,v_n)^T\in\mathbb{R}^n$ with all $v_i>0$ such that
$J(\lambda,c)v<0$ if and only if for each $1\le k\le n$,
\begin{equation}\label{iff}
(-1)^{k-1}\det(J_{1,2,\cdots,k,j})(\lambda,c)>0,\ \mbox{for }k+1\le j\le n.
\end{equation}
\end{lem}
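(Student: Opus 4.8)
The plan is to obtain Lemma~\ref{root} as an immediate specialization of Lemma~\ref{lemnsystem}, taking the abstract matrix $A$ there to be the Jacobian-type matrix $A := J(\lambda,c) = [\delta_{i,j}]$ and the unknown vector $x$ to be $v$. Once this identification is made, the entire content of the proof reduces to verifying that $J(\lambda,c)$ satisfies the two structural sign hypotheses required by Lemma~\ref{lemnsystem}, namely that its off-diagonal entries are nonnegative and its diagonal entries are nonpositive; the equivalence with the determinant conditions then follows word for word.

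First I would verify the off-diagonal condition. For $i\neq j$ the entries of $J(\lambda,c)$ are $\delta_{i,j}=\alpha_{i,j}$, which do not depend on $\lambda$ or $c$, and these are nonnegative by the standing assumption of the lemma. Next I would verify the diagonal condition by invoking Lemma~\ref{speed}: since we are working in the regime $c>c_*$ and $\lambda\in(\lambda_1(c),\lambda_2(c))$, that lemma yields $\delta_{i,i}=\delta_{i,i}(\lambda,c)<0$ for every $i=1,\dots,n$, so in particular $\delta_{i,i}\le 0$, which is exactly the diagonal hypothesis of Lemma~\ref{lemnsystem}. With both sign conditions confirmed, Lemma~\ref{lemnsystem} applies directly and states that $J(\lambda,c)v<0$ admits a solution $v=(v_1,\dots,v_n)^T$ with all $v_i>0$ if and only if $(-1)^{k-1}\det(A_{1,2,\dots,k,j})>0$ for each $1\le k\le n$ and $k+1\le j\le n$. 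Since the submatrix notation gives $A_{1,2,\dots,k,j}=J_{1,2,\dots,k,j}(\lambda,c)$, this is precisely condition \eqref{iff}, and the proof is complete.

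I expect no genuine technical obstacle here, as the statement is a corollary rather than an independent result. The only points demanding care are bookkeeping ones: confirming that the submatrix indexing $A_{1,2,\dots,k,j}$ used in Lemma~\ref{lemnsystem} corresponds exactly to $J_{1,2,\dots,k,j}(\lambda,c)$ in \eqref{iff} (each being a $(k+1)\times(k+1)$ principal-plus-one submatrix), and observing that the strict diagonal inequality $\delta_{i,i}<0$ furnished by Lemma~\ref{speed} comfortably implies the weak inequality $\delta_{i,i}\le 0$ that Lemma~\ref{lemnsystem} requires. Thus the role of Lemma~\ref{speed} is simply to guarantee the admissibility of the weight parameter range so that the sign structure needed by Lemma~\ref{lemnsystem} is in force.
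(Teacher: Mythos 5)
Your proposal is correct and matches the paper exactly: the paper states Lemma~\ref{root} as an immediate consequence of Lemmas~\ref{lemnsystem} and~\ref{speed}, which is precisely your specialization $A:=J(\lambda,c)$ with off-diagonal nonnegativity from the hypothesis $\delta_{i,j}\ge 0$ ($i\ne j$) and diagonal negativity from Lemma~\ref{speed} in the regime $c>c_*$, $\lambda\in(\lambda_1(c),\lambda_2(c))$. No gaps; your bookkeeping about the submatrix indexing is the only detail that needs checking, and it is handled correctly.
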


Furthermore, we generalize (H4) by the following assumptions
\begin{enumerate}
\item[{\rm(A4)}] ${\partial^2_{u_j u_k}f_i(u)}\le 0$, $\forall u\in\bf{I}$, $i,j,k=1,\cdots,n;$ and
\begin{align}
\sum_{i=1}^{n}\bar\alpha_{i,j}<0\ \mbox{and}\ \ 2\bar{\alpha}_{j,j}+\sum_{i\ne k,i=1}^{n}\bar\alpha_{i,k}<0,\  j=1,\cdots,n,
\end{align}
where $\bar{\alpha}_{i,j}:=\partial f_i({\bf K})/\partial u_j$, $i,j,=1,\cdots,n.$
\end{enumerate}
Note that (H4) is a special case of (A4) with $n=2$. The condition \eqref{iff} with $n=2$ is equivalent to Lemma \ref{lem4.1}.
According to the above lemmas, we assume (A1)--(A4), \eqref{iff} hold and $c>\max\{c^*,c_*\}$. Then we can also obtain the estimations of Lemmas \ref{lem5.2}--\ref{lem5.5}. More precisely,  similar to the previous notations,
let's write $ (V_{1}^+(\xi,t),\cdots,V_{n}^+(\xi,t) )$ by $  (V_{1}(\xi,t),\cdots,V_{n}(\xi,t) )$, and denote
$X(\xi,t):=(V_1(\xi,t),\cdots,V_n(\xi,t))^T$ and
$\Phi(\xi):=(\phi_1(\xi),\cdots,\phi_n(\xi))^T.$ By elementary computations, \eqref{5.3} is generalized to the following
the system
\begin{align}\label{5.35}
\partial_t V_i(\xi,t)+c\partial_\xi V_i(\xi,t)-d_i{\mathcal D}[V_i](\xi,t)&=f_i(\Phi(\xi) +X(\xi,t))-f_i(\Phi(\xi) )\nonumber\\
&=\nabla f_i(\Phi)X(\xi,t)+\frac{1}{2}\sum\limits_{j,k=1,2}\displaystyle\frac{\partial^2 f_i(\tilde{\Phi}_i)}{\partial u_ju_k}V_jV_k,
\end{align}
for $i=1,\cdots,n$, where $\Phi(\xi)\leq\tilde\Phi_i(\xi,t)\leq\Phi(\xi) +X(\xi,t)$.
Let's replace the parameters $\gamma$ and $(p,q)$ in the proof of Lemma \ref{lem5.2} by $\lambda$ and $v$ of Lemma \ref{root}, respectively. Then \eqref{key} yields to
\begin{equation*}
J(\lambda,c)v=[\delta_{i,j}]v<0.
\end{equation*}
Then the proof of Lemma \ref{lem5.2} also true.
In addition, it is easy to see that the proofs of Lemmas \ref{lem5.3}--\ref{lem5.5} also hold under the the assumption (A4). Hence, we have the following stability result.
\begin{thm}\label{thm1}
Assume {\rm(A1)--(A4)}, \eqref{iff} hold and $c>\max\{c^*,c_*\}$. System \eqref{nsystem} admits a traveling wave solution connecting ${\bf 0}$ and ${\bf K}$, which is exponential stable in the same sense as that of {\rm Theorem \ref{thm2.2}}.
\end{thm}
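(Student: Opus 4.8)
The existence of a monotone traveling wave front connecting ${\bf 0}$ and ${\bf K}$ is immediate: since (A1)--(A3) hold for \eqref{nsystem} and the profile equation \eqref{npf} coincides in form with that of \eqref{1}, Theorem \ref{existence} applies directly once $c>c^*$. Thus the whole content of the theorem is the exponential stability, and the plan is to reproduce the four-tier energy scheme of Section 4 verbatim, replacing every place where the $2$-component structure was used by its $n$-component analogue. First I would extend Propositions \ref{prop5.1} and \ref{lem5.1} to \eqref{nsystem}: the monotonicity hypothesis (A2), which forces $\partial_{u_k}f_i\ge 0$ for $i\ne k$ on $[{\bf 0},{\bf K}]$, is exactly what the standard energy/continuation argument and the comparison principle require, so both carry over. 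With the comparison principle in hand, I set $V_{i0}^+(x,0)=\max\{v_{i0}(x,0),\phi_i(x)\}$ and $V_{i0}^-(x,0)=\min\{v_{i0}(x,0),\phi_i(x)\}$, obtain the $n$-component sandwich analogous to \eqref{5.1}, and reduce the theorem to proving that each $V_i^\pm(\xi,t):=v_i^\pm(\xi-ct,t)-\phi_i(\xi)$ decays like $Ce^{-\mu t}$ in $L^\infty$.

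The heart of the argument is the weighted $L^1$ estimate, i.e.\ the $n$-component version of Lemma \ref{lem5.2}, and this is where the delicate point lies. Using (A4), specifically $\partial^2_{u_ju_k}f_i\le 0$, the quadratic Taylor remainder in \eqref{5.35} has a definite sign, so each equation yields a differential inequality in which only the linearization $\nabla f_i({\bf 0})$ survives; multiplying by $e^{\mu t}\omega_1(\xi)$ and integrating produces, for each $i$, an inequality whose coefficient matrix is precisely $J(\lambda,c)=[\delta_{i,j}]$ evaluated at $\lambda=\gamma\in(\lambda_1(c),\lambda_2(c))$, where $\gamma$ plays the role of the weight exponent. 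To combine these $n$ inequalities into a single decaying quantity I would invoke Lemma \ref{root}: since the off-diagonal entries $\delta_{i,j}=\alpha_{i,j}\ge 0$ ($i\ne j$) by (A2), the determinant condition \eqref{iff} guarantees a strictly positive vector $v=(v_1,\dots,v_n)^T$ with $J(\gamma,c)v<0$. Taking the $v_i$-weighted sum of the $n$ inequalities then makes all diagonal and off-diagonal contributions cooperate, exactly as the scalar pair $(p,q)$ of Lemma \ref{lem4.1} did in \eqref{key}, and choosing $\mu>0$ small enough preserves the strict negativity $J(\gamma,c)v<0$, yielding the weighted estimate \eqref{5.7}.

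From the weighted $L^1$ bound the remaining tiers follow by the splitting at $\xi_0$ used in Lemmas \ref{lem5.3}--\ref{lem5.5}. On $(-\infty,\xi_0]$ the weight satisfies $\omega_1\ge 1$, so the weighted estimate together with $0\le V_i\le K_i$ controls the unweighted integrals and forces them to decay; on $[\xi_0,\infty)$ I would use the behaviour of the coefficients near ${\bf K}$, where the two families of inequalities in (A4), namely $\sum_{i=1}^{n}\bar\alpha_{i,j}<0$ for the $L^1$ estimate and $2\bar\alpha_{j,j}+\sum_{i\ne k}\bar\alpha_{i,k}<0$ for the $L^2$ estimate, guarantee that the limiting coefficient vectors are strictly positive. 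Choosing $\xi_0$ large and $\mu$ small then makes those coefficients positive for all $\xi\ge\xi_0$, which closes the $L^1$ estimate (Lemma \ref{lem5.3}) and the $L^2$ estimate (Lemma \ref{lem5.4}); differentiating \eqref{5.35} in $\xi$ and repeating the $L^2$ computation gives the derivative estimate (Lemma \ref{lem5.5}).

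Finally I would combine the tiers via the one-dimensional interpolation $\|V_i\|_{L^\infty}\le 2^{2/3}\|V_i\|_{L^1}^{1/3}\|\partial_\xi V_i\|_{L^2}^{2/3}$, so that the exponential $L^1$ decay and the uniform $L^2$ bound on the derivative produce $\|V_i^\pm(\cdot,t)\|_{L^\infty}\le Ce^{-\mu t}$, and a squeezing argument between $v_i^-$ and $v_i^+$ transfers the decay to the genuine solution. I expect the main obstacle to be the weighted $L^1$ step: one must verify that the sign structure (A2) makes all off-diagonal entries $\delta_{i,j}$ ($i\ne j$) nonnegative on $(\lambda_1(c),\lambda_2(c))$, so that Lemma \ref{root} applies, and check that the positive combination vector $v$ survives the perturbation by $\mu>0$. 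The other tiers are bookkeeping, but confirming that the determinant condition \eqref{iff} is genuinely met at the chosen $\lambda=\gamma$ is the one new ingredient compared with the $2$-component proof.
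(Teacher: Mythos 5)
Your proposal matches the paper's own argument essentially step for step: existence comes from Theorem \ref{existence} for $c>c^*$, the weighted $L^1$ estimate is closed by replacing the pair $(p,q)$ of Lemma \ref{lem4.1} with the positive vector $v$ satisfying $J(\lambda,c)v<0$ supplied by Lemmas \ref{lemnsystem}--\ref{root} under the determinant condition \eqref{iff} (with $c>c_*$ ensuring $\delta_{i,i}<0$ and (A2) ensuring $\delta_{i,j}\ge 0$ for $i\ne j$), and the assumption (A4) plays exactly the role you assign it in the $L^1$ and $L^2$ tiers before the interpolation and squeezing steps. This is the same approach as the paper, carried out at the same (sketch) level of detail, so your proof is correct.
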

\section{Applications}
\setcounter{equation}{0}
\qquad In this section, we will apply the main theorem to the  discrete version of epidemic model \eqref{1.4}, that is
\begin{align}\label{6.1}
 \left\{ \begin{aligned}
      \partial_t v_1(x,t) =&d_1\mathcal {D}[v_1](x,t)-a_1v_1(x,t)+\bar{h}(v_2(x,t)),\\
      \partial_t v_2(x,t) =&d_2\mathcal {D}[v_2](x,t)-a_2v_2(x,t)+\bar{g}(v_1(x,t)).
  \end{aligned}
  \right.
\end{align}
where $t>0 $, $x\in\mathbb{R}$. According to \cite{Hsu-Yang},
we assume the nonlinearities $\bar{h}(\cdot)$ and $\bar{g}(\cdot)$ satisfy the following assumptions:
\begin{itemize}
\item [(B1)] $\bar{h},\bar{g}\in C^2(\mathbb{R}^+,\mathbb{R}^+)$, $\bar{h}(0)=\bar{g}(0)=0$, $K_2=\bar{g}(K_1)/a_2$, $\bar{h}(\bar{g}(K_1)/a_2)=a_1K_1$ and $\bar{h}(\bar{g}(u)/a_2)>a_1u$ for $u\in(0,K_1)$,
where $K_1$ is a positive constant.\medskip
\item [(B2)] $\bar{h}^\prime(0)\bar{g}^\prime(0)>a_1a_2$.\medskip
\item [(B3)] $\bar{h}^{\prime\prime}(v)\leq0$, $\bar{h}^\prime(v)\geq0$ for all $v\in[0,K_2]$ and $\bar{g}^{\prime\prime}(u)\leq 0$,
$\bar{g}^\prime(u)\geq0$ for all $u\in[0,K_1]$. \medskip
\item [(B4)] $\min\{a_1,a_2\}>\max\{\bar{g}^\prime(K_1),\bar{h}^\prime(K_2)\}$.
\end{itemize}
It's clear that \eqref{6.1} has two equilibria $(0,0)$ and $(K_1,K_2)$.
Under assumptions (B1)--(B3), the existence of traveling wave solutions for system \eqref{6.1} connecting $(0,0)$ and $(K_1,K_2)$ was proved by Hsu and Yang \cite{Hsu-Yang}.
Moreover, we can rewrite \eqref{6.1} in the form of \eqref{1.1} by setting
\begin{align*}
h(v_1(x,t),v_2(x,t))&:=-a_1v_1(x,t)+\bar{h}(v_2(x,t)),\\       g(v_1(x,t),v_2(x,t))&:=-a_2v_2(x,t)+\bar{g}(v_1(x,t)).
\end{align*}
Obviously, the assumptions {\rm (B1)--(B4)} imply that the conditions {\rm (H1)--(H4)} hold.
Therefore we can obtain same assertion of Theorems  \ref{thm2.1} and \ref{thm2.2} for system \eqref{6.1}.\medskip

Next, we illustrate some examples for $\bar{h}(x)$ and $\bar{g}(x)$ which satisfy the assumptions (B1)--(B4).

\begin{ex}{\rm Assume the Holling-II type functions
\begin{equation}\label{ex1}
\bar{h}(x)=\frac{\alpha_1 x}{\beta_1+\gamma_1 x}\ \mbox{and}\ \bar{g}(x)=\frac{\alpha_2 x}{\beta_2+\gamma_2 x},
\end{equation}
where $\alpha_i,\beta_i,\gamma_i$, $i=1,2$ are positive constants. Then $\bar{h}(0)=\bar{g}(0)=0$. Furthermore, elementary computations imply that
\begin{align*}
&(K_1,K_2)=\big(\frac{\alpha_1\alpha_2-a_1a_2\beta_1\beta_2}{a_1(a_2\beta_1\gamma_2+\alpha_2\gamma_1)},
\frac{\alpha_1\alpha_2-a_1a_2\beta_1\beta_2}{a_2(a_1\beta_2\gamma_1+\alpha_1\gamma_2)}\big),\medskip\\
&\bar{h}^\prime(0)=\frac{\alpha_1}{\beta_1},\ \bar{g}^\prime(0)= \frac{\alpha_2}{\beta_2},\ \bar{h}^{\prime\prime}(0)=\frac{-2\alpha_1\gamma_1}{\beta_1^2},\
\bar{g}^{\prime\prime}(0)=\frac{-2\alpha_2\gamma_2}{\beta_2^2},\medskip\\
&\bar{g}^\prime(K_1)=\frac{\alpha_2\beta_2}{(\beta_2+\gamma_2K_1)^2} \ \mbox{and}\ \bar{h}^\prime(K_2)=\frac{\alpha_1\beta_1}{(\beta_1+\gamma_1K_2)^2}.
\end{align*}
Hence, the assumptions (B1)--(B4) hold provided that
\begin{align}
\alpha_1\alpha_2>a_1a_2\beta_1\beta_2\ \mbox{and}\ \min\big\{a_1,a_2\}>\max\{\frac{\alpha_2\beta_2}{(\beta_2+\gamma_2K_1)^2} ,\frac{\alpha_1\beta_1}{(\beta_1+\gamma_1K_2)^2}\big\}.\label{ex}
\end{align}
Noting that the inequalities of \eqref{ex} hold when $\beta_1$ and $\beta_2$ are small enough. Thus, we can obtain the following result.
\begin{thm}\label{thm6.1}
Let $\bar{h}(x)$ and $\bar{g}(x)$ be the functions given by \eqref{ex1}, where $\alpha_i,\beta_i,\gamma_i$, $i=1,2$ are positive constants satisfying the conditions of \eqref{ex}.
Then the assertions of Theorems \ref{thm2.1} and \ref{thm2.2} hold for system \eqref{6.1}.
\end{thm}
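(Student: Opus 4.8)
The plan is to reduce the statement entirely to a verification of structural hypotheses, since the excerpt has already done the real work. Rewriting \eqref{6.1} in the form \eqref{1.1} with $h(v_1,v_2)=-a_1v_1+\bar h(v_2)$ and $g(v_1,v_2)=-a_2v_2+\bar g(v_1)$, the assumptions (B1)--(B4) imply (H1)--(H4), and under (H1)--(H4) Theorems \ref{thm2.1} and \ref{thm2.2} apply directly. Moreover the excerpt has already computed the equilibrium $(K_1,K_2)$ and the quantities $\bar h'(0),\bar g'(0),\bar h''(0),\bar g''(0),\bar g'(K_1),\bar h'(K_2)$. Hence all that remains is to check that the Holling-II functions \eqref{ex1} satisfy (B1)--(B4) whenever \eqref{ex} holds, and then to quote Theorems \ref{thm2.1} and \ref{thm2.2}.

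First I would dispose of the three easy hypotheses. Condition (B3) is automatic for positive parameters: differentiating \eqref{ex1} gives $\bar h'(x)=\alpha_1\beta_1/(\beta_1+\gamma_1 x)^2>0$ and $\bar h''(x)=-2\alpha_1\beta_1\gamma_1/(\beta_1+\gamma_1 x)^3<0$ for all $x\ge 0$, and likewise for $\bar g$, so each function is increasing and concave on the relevant interval. Condition (B2), namely $\bar h'(0)\bar g'(0)=\alpha_1\alpha_2/(\beta_1\beta_2)>a_1a_2$, is precisely the first inequality in \eqref{ex}. Condition (B4) is the second inequality in \eqref{ex}, read off from the computed values of $\bar g'(K_1)$ and $\bar h'(K_2)$. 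I would also note here that the first inequality in \eqref{ex} is exactly what makes the numerator of the displayed $K_1$ positive, so $(K_1,K_2)$ is a genuine positive equilibrium.

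The substantive point is (B1), which encodes both that the system has no equilibrium other than ${\bf 0}$ and ${\bf K}$ in the first quadrant and the monostability inequality. Here I would pass to the scalar equilibrium equation: eliminating $v_2=\bar g(v_1)/a_2$ reduces the fixed-point problem to $a_1u=\bar h(\bar g(u)/a_2)$ for $u=v_1$. For the Holling-II choice \eqref{ex1} this collapses dramatically, since composing two rational functions and clearing denominators turns it into a \emph{linear} equation in $u$; explicitly
\[
\bar h\!\left(\frac{\bar g(u)}{a_2}\right)-a_1u
   =\frac{u\big[\alpha_1\alpha_2-a_1a_2\beta_1\beta_2-a_1u(a_2\beta_1\gamma_2+\alpha_2\gamma_1)\big]}
          {a_2\beta_1\beta_2+u(a_2\beta_1\gamma_2+\alpha_2\gamma_1)}.
\]
The denominator is positive for $u\ge 0$, and the bracket in the numerator is linear and strictly decreasing in $u$, equal to $\alpha_1\alpha_2-a_1a_2\beta_1\beta_2>0$ at $u=0$ by (B2) and vanishing exactly at $u=K_1$. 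Hence the right-hand side is positive on $(0,K_1)$, giving $\bar h(\bar g(u)/a_2)>a_1u$ there, and negative for $u>K_1$, so $K_1$ is the only positive root; this yields $K_2=\bar g(K_1)/a_2$, $\bar h(\bar g(K_1)/a_2)=a_1K_1$, the monostability inequality, and the uniqueness of the positive equilibrium. Equivalently, without computing, one may argue abstractly: by (B3) the composition $u\mapsto\bar h(\bar g(u)/a_2)$ is concave and increasing with value $0$ at $0$, so $\Psi(u):=\bar h(\bar g(u)/a_2)-a_1u$ is concave with $\Psi(0)=0$ and $\Psi'(0)>0$ by (B2), and a concave function with these properties has a single positive zero and is positive to its left.

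The main obstacle is therefore quite mild: it is precisely the bookkeeping in (B1), and the cleanest route is the explicit reduction above, which exploits the fact that the Holling-II composition reduces the equilibrium equation to a linear one. Once (B1)--(B4) are in hand, the implication (B1)--(B4)$\Rightarrow$(H1)--(H4) recorded in the excerpt lets me invoke Theorems \ref{thm2.1} and \ref{thm2.2} verbatim for \eqref{6.1}, which is the assertion of the theorem. Finally I would remark, as the excerpt does, that \eqref{ex} is nonvacuous---for instance it holds when $\beta_1,\beta_2$ are small---so the hypotheses are genuinely satisfiable.
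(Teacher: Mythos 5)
Your proposal is correct and follows essentially the same route as the paper: rewrite \eqref{6.1} in the form \eqref{1.1}, verify (B1)--(B4) from the explicit formulas for $(K_1,K_2)$, $\bar h'(0)\bar g'(0)$, $\bar g'(K_1)$, $\bar h'(K_2)$ under \eqref{ex}, and then quote Theorems \ref{thm2.1} and \ref{thm2.2} via the implication (B1)--(B4) $\Rightarrow$ (H1)--(H4). The only difference is that you spell out the verification of (B1) (the rational-function reduction showing $\bar h(\bar g(u)/a_2)-a_1u$ has linear numerator bracket, positive on $(0,K_1)$ with unique positive root $K_1$), which the paper subsumes under ``elementary computations''; your explicit check is accurate and, if anything, more complete than the paper's.
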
}
\end{ex}
\begin{ex}{\rm
Assume
\begin{equation}\label{ex2}
\bar{h}(x)=ax\ \mbox{and } \bar{g}(x)=px e^{-qx^m},
\end{equation}
where $p, q$ and $m$ are positive constants. The function $\bar{g}(x)$ is called the Ricker type function. Of particular, when $m=1$, $\bar{g}(x)$ is reduced to the Nicholson's blowflies function.
%
By elementary computations, we have
\begin{align}
(K_1,K_2)&=\big((\frac{1}{q}{\ln(\frac{ap}{a_1a_2})})^{{1}/{m}},\frac{a_1}{a} K_1\big),\label{equ}\medskip\\
\bar{g}^\prime(x)=p(1-qmx^m)e^{-qx^m}&\ \mbox{and }\  \bar{g}^{\prime\prime}(x)=pqmx^{m-1}e^{-qx^m}(qmx^m-1-m).\label{gd}
\end{align}
Let us set $u_*:=({mq})^{-{1}/{m}}$, then \eqref{gd} implies that $\bar{g}(x)$ is non-decreasing on $[0, u_*]$ and non-increasing on $[u_*, \infty]$. Therefore,
if $1<\frac{ap}{a_1a_2}\leq e^{{1}/{m}}$, we have
\begin{center}
$K_1\leq u_*$, $\bar{g}^\prime(x)\geq0$ for
$x\in[0,K_1]$ and $\bar{h}^\prime(x)=a\geq0$ for $x\in[0,K_2]$.
\end{center}
Furthermore, for $x\in[0,K_1]$, we know that
\begin{equation*}
\bar{g}^\prime(K_1)=\frac{a_1a_2}{a}(1-m\ln(\frac{ap}{a_1a_2}))\ge 0\ \mbox{and }\ \bar{g}^{\prime\prime}(x)\leq0 .
\end{equation*}
Hence, the assumptions (B1)--(B4) hold provided that
\begin{align}\label{ex22}
\min\{a_1,a_2\}>\max\big\{a ,\frac{a_1a_2}{a}(1-m\ln(\frac{ap}{a_1a_2}))\big\}.
\end{align}
Thus, we can obtain the following result.
\begin{thm}\label{thm6.2}
Let $\bar{h}(x)$ and $\bar{g}(x)$ be the functions given by \eqref{ex2}, where $p,q,m$, $i=1,2$ are positive constants. Assume $1<\frac{ap}{a_1a_2}\leq e^{{1}/{m}}$ and the conditions of \eqref{ex22} hold. Then the assertions of Theorems \ref{thm2.1} and \ref{thm2.2} hold for system \eqref{6.1}.
\end{thm}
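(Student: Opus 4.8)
The plan is to verify that the Ricker-type nonlinearities given by \eqref{ex2} satisfy the four structural hypotheses (B1)--(B4), since the applications section has already shown that (B1)--(B4) imply (H1)--(H4), and Theorems \ref{thm2.1} and \ref{thm2.2} hold whenever (H1)--(H4) do. Thus the entire task reduces to checking (B1)--(B4) for the explicit pair $\bar h(x)=ax$ and $\bar g(x)=pxe^{-qx^m}$ under the standing assumptions $1<\tfrac{ap}{a_1a_2}\le e^{1/m}$ and \eqref{ex22}.

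First I would establish (B1). The smoothness $\bar h,\bar g\in C^2(\mathbb R^+,\mathbb R^+)$ and $\bar h(0)=\bar g(0)=0$ are immediate from \eqref{ex2}. For the equilibrium identities I would solve $\bar h(\bar g(K_1)/a_2)=a_1K_1$ explicitly: since $\bar h$ is linear, this reads $a\cdot\bar g(K_1)/a_2=a_1K_1$, i.e.\ $\bar g(K_1)=\tfrac{a_1a_2}{a}K_1$, which after substituting $\bar g(K_1)=pK_1e^{-qK_1^m}$ yields $pe^{-qK_1^m}=\tfrac{a_1a_2}{a}$, equivalent to $K_1=(\tfrac1q\ln\tfrac{ap}{a_1a_2})^{1/m}$ as in \eqref{equ}; the condition $\tfrac{ap}{a_1a_2}>1$ guarantees this is a well-defined positive number, and $K_2=\bar g(K_1)/a_2=\tfrac{a_1}{a}K_1$ follows. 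The strict inequality $\bar h(\bar g(u)/a_2)>a_1u$ for $u\in(0,K_1)$ is the monostability condition; I would recast it as $\bar g(u)>\tfrac{a_1a_2}{a}u$, i.e.\ $pe^{-qu^m}>\tfrac{a_1a_2}{a}$, and verify it by monotonicity: since $u\mapsto pe^{-qu^m}$ is strictly decreasing on $[0,K_1]\subseteq[0,u_*]$ and equals $\tfrac{a_1a_2}{a}$ at $u=K_1$, it exceeds that value for all smaller $u$.

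Next I would check (B2)--(B4) using the derivative formulas in \eqref{gd}. For (B2), $\bar h'(0)\bar g'(0)=a\cdot p>a_1a_2$ is exactly the hypothesis $\tfrac{ap}{a_1a_2}>1$. For (B3), concavity $\bar h''\equiv0\le0$ is trivial, and $\bar g''(x)=pqmx^{m-1}e^{-qx^m}(qmx^m-1-m)\le 0$ on $[0,K_1]$ because I would show $K_1\le u_*=(mq)^{-1/m}$; indeed $K_1\le u_*$ is equivalent to $\tfrac1q\ln\tfrac{ap}{a_1a_2}\le\tfrac{1}{mq}$, i.e.\ $\ln\tfrac{ap}{a_1a_2}\le\tfrac1m$, which is precisely the upper bound $\tfrac{ap}{a_1a_2}\le e^{1/m}$ assumed in the theorem. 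Monotonicity $\bar g'(x)\ge0$ on $[0,K_1]$ follows since $\bar g$ increases on $[0,u_*]\supseteq[0,K_1]$, and $\bar h'(x)=a\ge0$ is obvious. Finally, for (B4) I would evaluate $\bar g'(K_1)=p(1-qmK_1^m)e^{-qK_1^m}$; substituting $e^{-qK_1^m}=\tfrac{a_1a_2}{ap}$ and $qK_1^m=\ln\tfrac{ap}{a_1a_2}$ gives $\bar g'(K_1)=\tfrac{a_1a_2}{a}(1-m\ln\tfrac{ap}{a_1a_2})$, while $\bar h'(K_2)=a$, so $\max\{\bar g'(K_1),\bar h'(K_2)\}=\max\{\tfrac{a_1a_2}{a}(1-m\ln\tfrac{ap}{a_1a_2}),\,a\}$, and (B4) is exactly the assumed inequality \eqref{ex22}.

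I do not anticipate a serious analytic obstacle here, as each verification is a short algebraic or monotonicity argument; the only point requiring care is keeping the various logarithmic identities consistent, in particular correctly translating the range condition $\tfrac{ap}{a_1a_2}\le e^{1/m}$ into the geometric fact $K_1\le u_*$ that underlies both the sign of $\bar g''$ on $[0,K_1]$ and the nonnegativity of $\bar g'(K_1)$. Once (B1)--(B4) are confirmed, the conclusion is immediate by invoking the reduction (B1)--(B4) $\Rightarrow$ (H1)--(H4) established earlier in this section together with Theorems \ref{thm2.1} and \ref{thm2.2}.
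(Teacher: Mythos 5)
Your proposal is correct and takes essentially the same route as the paper: it verifies (B1)--(B4) for $\bar h(x)=ax$, $\bar g(x)=pxe^{-qx^m}$ using the formulas \eqref{equ}--\eqref{gd}, the key observation that $K_1\le u_*=(mq)^{-1/m}$ is equivalent to $\frac{ap}{a_1a_2}\le e^{1/m}$ (which gives the signs of $\bar g'$ and $\bar g''$ on $[0,K_1]$), the identification $\bar g'(K_1)=\frac{a_1a_2}{a}\big(1-m\ln\frac{ap}{a_1a_2}\big)$ and $\bar h'(K_2)=a$ so that (B4) is exactly \eqref{ex22}, and then invokes the reduction (B1)--(B4) $\Rightarrow$ (H1)--(H4) together with Theorems \ref{thm2.1} and \ref{thm2.2}. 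Your explicit monotonicity verification of the monostability inequality $\bar h(\bar g(u)/a_2)>a_1u$ on $(0,K_1)$ is a touch more detailed than the paper's terse treatment, but the argument is the same.
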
}
\end{ex}


\end{document}